\newcommand{\g}{\mathrm{g}}
\newcommand{\F}{\mathrm{F}}
\newcommand{\G}{\mathrm{G}}
\newcommand{\m}{\mathrm{m}}
\newcommand{\e}{\mathrm{e}}
\newcommand{\T}{\mathrm{T}}
\newcommand{\Ap}{\mathrm{Ap}}
\newcommand{\Z}{\mathbb{Z}}
\newcommand{\N}{\mathbb{N}}
\newcommand{\Nrm}{\mathrm{N}}
\newcommand{\Zrm}{\mathrm{Z}}
\newcommand{\Urm}{\mathrm{U}}
\newcommand{\D}{\mathrm{D}}
\renewcommand{\P}{\mathcal{P}}
\newcommand{\Sem}{\mathcal{S}}
\newcommand{\Ho}{\mathcal{H}}
\newcommand{\I}{\mathcal{I}}
\newcommand{\Kunz}{\mathcal{K}}
\newcommand{\dsum}{\displaystyle\sum}
\newtheorem{theo}{Theorem}
\newtheorem{ex}[theo]{Example}
\newtheorem{lem}[theo]{Lemma}
\newtheorem{cor}[theo]{Corollary}
\newtheorem{prop}[theo]{Proposition}
\begin{document}

\title[Numerical semigroups with fixed Frobenius number]{A semilattice structure for the set of numerical semigroups with fixed Frobenius number}

\author[V. Blanco and J.C. Rosales]{V. Blanco and J.C. Rosales\medskip\\
D\lowercase{epartamento de } \'A\lowercase{lgebra}, U\lowercase{niversidad de } G\lowercase{ranada}\medskip\\
\texttt{\lowercase{vblanco@ugr.es}} \; \texttt{\lowercase{jrosales@ugr.es}}}

\address{Departamento de \'Algebra, Universidad de Granada}

\keywords{Numerical semigroup, Frobenius number, Partitions of sets, Kunz-coordinates vectors.}

\subjclass[2010]{20M14, 11D07, 05A18, 90C10.}


\begin{abstract}
We present a procedure to enumerate the whole set of numerical semigroups with a given Frobenius number $F$, $\Sem(F)$. The methodology is based on the construction of a partition of $\Sem(F)$ by a congruence relation. We identify exactly one irreducible and one homogeneous numerical semigroup at each class in the relation, and from those two elements we reconstruct the whole class. An alternative more efficient method is proposed based on the use of the Kunz-coordinates vectors of the elements in $\Sem(F)$.
\end{abstract}

\maketitle

\section{Introduction}

Let $\N$ be the set of nonnegative integer numbers. A numerical semigroup is a subset $S$ of $\N$ closed under
addition, containing zero and such that $\N\backslash S$ is finite.  The largest integer not belonging to $S$ is called the \textit{Frobenius number} of $S$ and we denote it by $\F(S)$.  The cardinal of its set of gaps, $\G(S)=\N \backslash S$, is usually called the genus of $S$ and it is denoted by $\g(S)$.

Let $F$ be a positive integer and $\Sem(F)$ the set of all the numerical semigroups with Frobenius number $F$. The main goal of this paper is to describe a procedure to enumerate all the elements in $\Sem(F)$.

It is clear that $(\Sem(F), \cap)$ is a semilattice, that is, a commutative semigroup such that all its elements are idempotent. In Section \ref{sec:2} we define a congruence relation $R$ over $\Sem(F)$ verifying that the quotient semilattice $\frac{\Sem(F)}{R} = \{[S]: S \in \Sem(F)\}$ is a partition of $\Sem(F)$ into sets which are closed under unions and intersections and that have maximum and minimum (with respect to the inclusion ordering).

A numerical semigroup is \textit{irreducible} if it cannot be expressed as an intersection of two numerical semigroups containing it properly. This notion was introduced in \cite{pacific} where it is also proven, from \cite{barucci} and \cite{froberg}, that the family of irreducible numerical semigroups is the union of two families of numerical semigroups that have been widely studied and that have special importance in this theory: symmetric and pseudo-symmetric numerical semigroups. We denote by $\I(F)$ the set of irreducible numerical semigroups with Frobenius number $F$. In Section \ref{sec:3} we see that each class in $\frac{\Sem(F)}{R}$ contains an unique irreducible numerical semigroup which is the maximum of that class.

If $A$ is a nonempty subset in $\N$, we denote by $\langle A \rangle$ the submonoid of $(\N, +)$ generated by $A$, that is $\langle A \rangle = \{\lambda_1a_1 + \cdots + \lambda_na_n: n\in \N \backslash \{0\}, a_1, \ldots, a_n \in A, \text{ and } \lambda_1, \ldots, \lambda_n \in \N\}$. It is well-known (see for instance \cite{springer}) that $\langle A \rangle$ is a numerical semigroup if and only if $\gcd(A)=1$. If $S$ is a numerical semigroup and $S=\langle A \rangle$, then we say that $A$ is a system of generators of $S$. If there not exists any other proper subset of $A$ generating $S$, we say that $A$ is a minimal system of generators of $S$. Every numerical semigroup admits an unique minimal system of generators and that such a system is finite. If $S$ is a numerical semigroup, the elements in a minimal system of generators of $S$ are called \textit{minimal generators} of $S$.

We say that a numerical semigroup $S$ is homogeneous if its minimal generators do not belong to the open interval $]\frac{\F(S)}{2}, \F(S)[$. We denote by $\Ho(F)$ the set of homogeneous numerical semigroups with Frobenius number $F$. In Section \ref{sec:3} we see that each class in $\frac{\Sem(F)}{R}$ contains an unique homogeneous numerical semigroup which is the minimum of that class. As a consequence we have that the sets $\frac{\Sem(F)}{R}$, $\I(F)$ and $\Ho(F)$ have the same cardinal.

As a consequence of the above results we have that $\Sem(F) = \bigcup_{S \in \I(F)} [S]$. Therefore, to compute all the elements in $\Sem(F)$ it is enough to compute the elements in $\I(F)$ and for each $S \in \I(F)$, compute $[S]$. In a recent paper \cite{arbol}, the authors address the first part, that is, it is given an efficient procedure to compute $\I(F)$. In Section \ref{sec:4} we center on describing an algorithm that allows to compute $[S]$ when $S \in \I(F)$ is given. Finally, in Section \ref{sec:5} we translate the results in the other sections in terms of the Kunz-coordinates vectors with respect to $F+1$ to provide an efficient algorithm to compute $[S]$, and thus $\Sem(F)$.

\section{A partition of $\Sem(F)$}
\label{sec:2}

In this section we describe a partition of the elements in $\Sem(F)$, for some positive integer $F$, based on the congruence induced by a semigroup homomorphism. It leads us to a simple methodology to enumerate the elements in $\Sem(F)$ by analyzing each of congruence classes that define the partition.

Through this paper, the power set is denoted by $\P(X) = \{A: A \subseteq X\}$, for any set $X$.  For integers $a$ and $b$, we say that $a$ divides $b$ if there exists an integer $c$ such
that $b = ca$, and we denote this by $a | b$. Otherwise, $a$ does not divide $b$, and
we denote this by $a\nmid b$.

Let $F$ be a positive integer, we denote by $\Nrm(F) = \{ n \in \N \backslash\{0\}: n < \frac{F}{2} \text{ and } n\nmid F\}$. It is clear that $(\P(\Nrm(F)), \cap)$ is a semilattice.

\begin{lem}
\label{lem:1}
Let $F$ be a positive integer, then the correspondence $\theta: \Sem(F) \longrightarrow \P(\Nrm(F))$ defined as $\theta(S):=\{s \in S\backslash\{0\}: s<\frac{F}{2}\}$ is a semigroup homomorphism.
\end{lem}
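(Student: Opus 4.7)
The plan is to verify two things: that $\theta$ is well-defined (lands in $\P(\Nrm(F))$), and that it respects the semilattice operation $\cap$ on both sides.

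For well-definedness, given $S \in \Sem(F)$ and $s \in \theta(S)$, I need to check that $s \in \Nrm(F)$, i.e.\ that $s$ is a positive integer strictly less than $F/2$ that does not divide $F$. Positivity and the bound are immediate from the definition of $\theta(S)$. The nontrivial clause is $s \nmid F$, and this is where the Frobenius-number hypothesis is essential. I would argue by contradiction: if $s \mid F$, write $F = cs$ with $c \in \N \setminus \{0\}$. Since $s < F/2$, we get $c \geq 3$ (in particular $c \geq 2$). Now $S$ is closed under addition and $s \in S$, so $cs \in S$; but $cs = F \notin S$ since $F$ is the Frobenius number of $S$, a contradiction. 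Hence $s \nmid F$ and $\theta(S) \subseteq \Nrm(F)$.

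Next, to make sense of the homomorphism claim I should remark that $(\Sem(F), \cap)$ really is a semigroup, i.e.\ $\Sem(F)$ is closed under intersection. If $S_1, S_2 \in \Sem(F)$, then $S_1 \cap S_2$ contains $0$, is closed under addition, and its complement in $\N$ is contained in the union of the two finite complements, hence finite, so $S_1 \cap S_2$ is a numerical semigroup. Moreover $F \notin S_1 \cap S_2$, while every integer larger than $F$ belongs to both $S_i$, hence to $S_1 \cap S_2$; so $\F(S_1 \cap S_2) = F$.

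Finally, the homomorphism property $\theta(S_1 \cap S_2) = \theta(S_1) \cap \theta(S_2)$ is essentially by rewriting definitions: an element $s$ lies in the left-hand side exactly when $s \neq 0$, $s < F/2$, and $s \in S_1 \cap S_2$; this last condition splits as $s \in S_1$ and $s \in S_2$, which reassembles precisely as $s \in \theta(S_1)$ and $s \in \theta(S_2)$.

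The only real content is the divisibility step in well-definedness; everything else is bookkeeping. I do not expect any genuine obstacle beyond making sure the inequality $s < F/2$ forces $F/s \geq 2$ (so that the multiple $cs = F$ is actually produced inside $S$ by the semigroup axiom), which is exactly where the bound $F/2$ in the definition of $\Nrm(F)$ is calibrated.
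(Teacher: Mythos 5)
Your proposal is correct and follows essentially the same route as the paper: well-definedness reduces to showing $s \nmid F$ for $s \in \theta(S)$, which the paper also handles by noting that $s \mid F$ would force $F \in S$, and the compatibility with $\cap$ is a direct unwinding of definitions that the paper leaves to the reader. Your only extra content is spelling out the closure of $\Sem(F)$ under intersection (which the paper asserts in the introduction) and the observation about $c \geq 2$, which is actually not needed since even $c=1$ would put $F=s$ in $S$.
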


\begin{proof}
Let us see first that $\theta$ is an application. We need to prove that $\{s \in S\backslash\{0\}: s<\frac{F}{2}\} \subseteq \Nrm(F)$. It is clear since if $s | F$, then $F \in S$ which is not possible.

To conclude the proof, the reader can easily check that for $S_1, S_2 \in \Sem(F)$, $\theta(S_1 \cap S_2) = \theta(S_1) \cap \theta(S_2)$.
\end{proof}

Let $R$ be the kernel congruence associated to $\theta$ ($S R S'$ if $\theta(S)=\theta(S')$). For $S \in \Sem(F)$ we denote by $[S]=\{S' \in \Sem(F): SRS'\}$. Then, the quotient set $\frac{\Sem(F)}{R} = \{[S]: S \in \Sem(F)\}$ is also a semilattice with the operation $[S_1] \cap [S_2] = [S_1 \cap S_2]$.

\begin{lem}
\label{lem:2}
Let $S \in \Sem(F)$. Then, $[S]$ is a subset of $\Sem(F)$ closed under union and intersection.
\end{lem}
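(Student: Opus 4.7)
The plan is to verify the two closure properties separately, with intersection being essentially free and union being the substantive case.

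For intersection, I would argue as follows. If $S_1, S_2 \in [S]$, then $S_1 \cap S_2$ is a submonoid of $\N$; it has finite complement because $\N\setminus(S_1\cap S_2) \subseteq (\N\setminus S_1)\cup(\N\setminus S_2)$; every integer $n>F$ lies in both $S_1$ and $S_2$ and hence in $S_1\cap S_2$, while $F\notin S_1$ forces $F\notin S_1\cap S_2$, so $\F(S_1\cap S_2)=F$ and $S_1\cap S_2\in\Sem(F)$. Lemma \ref{lem:1} then gives $\theta(S_1\cap S_2)=\theta(S_1)\cap\theta(S_2)=\theta(S)$, whence $S_1\cap S_2\in[S]$.

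For union, the content is to show that $S_1\cup S_2$ is closed under addition, since once this is established the same Frobenius-number argument as above applies, and $\theta(S_1\cup S_2)=\theta(S_1)\cup\theta(S_2)=\theta(S)$. Take $a\in S_1$ and $b\in S_2$ (the cases where both lie in one $S_i$ are trivial). The key observation is that $\theta(S_1)=\theta(S_2)$, so any element of $S_i\setminus\{0\}$ strictly below $F/2$ already belongs to the other semigroup. I would split into three cases: if $a<F/2$ then $a\in\theta(S_1)=\theta(S_2)\subseteq S_2$, so $a+b\in S_2$; symmetrically if $b<F/2$; and if both $a,b\geq F/2$ then $a+b\geq F$, while $a+b\neq F$ because $F\notin S_1\cup S_2$, so $a+b>F$ and therefore $a+b\in S_1\cap S_2$.

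The only obstacle worth calling out is the middle case above: unions of numerical semigroups are not closed under addition in general, and the trick is to exploit exactly the hypothesis that defines $R$ (equality of the small part $\theta$) to move a summand from one semigroup into the other whenever it is small enough, and to use the Frobenius bound to handle the remaining case. Everything else reduces to routine verifications that intersections and unions of $\Sem(F)$-members with the same $\theta$-value still have Frobenius number $F$ and the same $\theta$-image.
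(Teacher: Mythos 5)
Your proposal is correct and follows essentially the same route as the paper: intersection is handled via Lemma \ref{lem:1}, and for the union the hypothesis $\theta(S_1)=\theta(S_2)$ is used to show that the only nontrivial sums are of two elements at least $\frac{F}{2}$, hence greater than $F$. One small step deserves an explicit line: the claim ``$a+b\neq F$ because $F\notin S_1\cup S_2$'' is not immediate as stated (you are trying to prove $a+b\in S_1\cup S_2$, so you cannot invoke membership of $a+b$); the correct justification is that $a,b\geq \frac{F}{2}$ and $a+b=F$ would force $a=b=\frac{F}{2}$, whence $F=a+a\in S_1$, contradicting $\F(S_1)=F$ --- the paper sidesteps this by observing directly that an element of $S_i$ can never equal $\frac{F}{2}$, so the relevant summands are strictly greater than $\frac{F}{2}$.
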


\begin{proof}
Let $S_1, S_2 \in [S]$. Then, $\theta(S_1)=\theta(S_2) = \theta(S)$. By applying Lemma \ref{lem:1}, we have that $\theta(S_1 \cap S_2) = \theta(S_1) \cap \theta(S_2) = \theta(S)$. Hence, $S_1 \cap S_2 \in [S]$.

Let us see now that $S_1 \cup S_2 \in [S]$. First, we prove that $S_1 \cup S_2 \in \Sem(F)$. It is enough to see that $S_1\cup S_2$ is a semigroup since it is clear that in such a case, $\F(S_1 \cup S_2) = F$. Let us see then that the addition of two elements in $S_1 \cup S_2$ is an element in $S_1 \cup S_2$. Since $S_1$ and $S_2$ are semigroups, we only need to see that if $n \in S_1\backslash S_2$ and $m\in S_2 \backslash S_1$, then $n+m \in S_1 \cup S_2$. We know that $\theta(S_1) = \theta(S_2)$, so we deduce that $n>\frac{F}{2}$ and $m > \frac{F}{2}$. Then, $n+m >F$ and $n+m \in S_1 \cup S_2$. Finally, it is clear that $\theta(S_1 \cup S_2)= \theta(S)$, so $S_1 \cup S_2 \in [S]$.
\end{proof}

A \emph{lattice} is a set with two associative, commutative idempotent binary operations linked by corresponding absorption laws. If the two distributive laws are also verified (with respect to the two operations) the lattice is called distributive. If both operations has a neutral element we say that the lattice has a neutral element.

\begin{theo}
\label{theo:3}
Let $F$ be a positive integer, then $\left( \frac{\Sem(F)}{R}, \cap\right)$ is isomorphic to a subsemilattice of $\left(\P(\Nrm(F)), \cap\right)$. Moreover, $\frac{\Sem(F)}{R}$ is partition of $\Sem(F)$ in distributive lattices with neutral elements.
\end{theo}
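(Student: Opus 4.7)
The plan is to split the statement into its two parts and handle them almost independently. The first assertion is a direct application of the standard first isomorphism theorem for semigroups (here, for semilattices), while the second follows from Lemma \ref{lem:2} together with the observation that $\Sem(F)$ is a finite set, so each class contains a minimum and a maximum.

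First I would address the isomorphism claim. By Lemma \ref{lem:1}, $\theta$ is a semilattice homomorphism, and $R$ is by definition its kernel congruence. Thus the map $[S] \longmapsto \theta(S)$ is well defined, injective, and a homomorphism from $\frac{\Sem(F)}{R}$ onto the image $\theta(\Sem(F))$, which is a subset of $\P(\Nrm(F))$ closed under $\cap$ (again by Lemma \ref{lem:1}). This already gives the isomorphism onto a subsemilattice of $(\P(\Nrm(F)), \cap)$.

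Next I would turn to the lattice structure on each class $[S]$. The key input is Lemma \ref{lem:2}, which states that $[S]$ is closed under both $\cap$ and $\cup$. The set-theoretic operations $\cap$ and $\cup$ are commutative, associative, idempotent, and satisfy the absorption laws and both distributive laws on any family of subsets of $\N$; restricting to $[S]$ preserves all of these identities. Hence $([S], \cap, \cup)$ is a distributive lattice. For the existence of neutral elements I would use that $\Sem(F)$ is finite (each $S \in \Sem(F)$ is determined by $S \cap \{1, \ldots, F-1\}$, so $|\Sem(F)| \leq 2^{F-1}$), and therefore $[S]$ is finite. Iterating Lemma \ref{lem:2}, the element $M(S) := \bigcup_{S' \in [S]} S'$ lies in $[S]$ and is the maximum (hence the neutral element for $\cap$), and $m(S) := \bigcap_{S' \in [S]} S'$ lies in $[S]$ and is the minimum (hence the neutral element for $\cup$).

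Finally, the family $\{[S] : S \in \Sem(F)\}$ is a partition of $\Sem(F)$ simply because $R$ is an equivalence relation. The main obstacle, if any, is purely bookkeeping: checking that the absorption laws and distributive laws survive the restriction to $[S]$ (trivial, since they are identities valid on all subsets of $\N$) and confirming that the global intersection and union of $[S]$ really belong to $[S]$, which is where finiteness of $\Sem(F)$ is used to reduce to the binary case covered by Lemma \ref{lem:2}.
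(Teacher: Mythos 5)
Your proposal is correct and follows essentially the same route as the paper: the first isomorphism theorem for semigroups gives the subsemilattice claim, Lemma \ref{lem:2} gives closure of each class under $\cap$ and $\cup$ (with distributivity and absorption inherited as set-theoretic identities), and $\bigcup_{S' \in [S]} S'$ and $\bigcap_{S' \in [S]} S'$ serve as the neutral elements. Your explicit appeal to the finiteness of $\Sem(F)$ to reduce the global union and intersection to iterated applications of Lemma \ref{lem:2} is a small but welcome detail that the paper leaves implicit.
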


\begin{proof}
Let $\theta: \Sem(F) \longrightarrow \P(\Nrm(F))$ the semigroup homomorphism defined in Lemma \ref{lem:1}. By elemental theory of semigroups, we know that ${\rm Im}(\theta) = \{\theta(S): S \in \Sem(F)\}$ is a subsemilattice of $\P(\Nrm(F))$ and that the semilattices $\left( \frac{\Sem(F)}{R}, \cap\right)$ and $({\rm Im}(\theta), \cap)$ are isomorphic.

From Lemma \ref{lem:2} we deduce that $\frac{\Sem(F)}{R}$ is a partition of $\Sem(F)$ into distributive lattices. To conclude the proof note that if $[S] \in \frac{\Sem(F)}{R}$, then $\Zrm([S])=\bigcap_{S' \in [S]} S'$ is the neutral element of $([S], \cup)$ and $\Urm([S])=\bigcup_{S' \in [S]} S'$ is the neutral element of $([S], \cap)$.
\end{proof}

Observe that $\Zrm([S])=\bigcap_{S' \in [S]} S'$ (resp. $\Urm([S])=\bigcup_{S' \in [S]} S'$) is the minimum (resp. maximum) of $[S]$ with respect to the inclusion ordering.
\section{The neutral elements in $[S]$}
\label{sec:3}
In what follows we analyze the neutral elements of each of the operations that gives to $[S]$ the structure of lattice, for any $S \in \Sem(F)$. We see that the set of irreducible and homogeneous numerical semigroups play an important role in this study.

Let $F$ be a positive integer. Recall that $\I(F)$ denotes the set of irreducible numerical semigroups with Frobenius number $F$. Our first goal in this section is to prove that $\I(F) = \{\Urm([S]): [S] \in  \frac{\Sem(F)}{R}\}$.

The following result is deduced in \cite{springer}:
\begin{lem}
\label{lem:4}
Let $S$ be a numerical semigroup with Frobenius number $F$. Then:
\begin{enumerate}
\item\label{lem4:1} $S$ is irreducible if and only if $S$ is maximal (with respect to the inclusion ordering) in $\Sem(F)$.
\item\label{lem4:2} If $h=\max \{x \in \N\backslash S: F-x \not\in S \text{ and } x \neq \frac{F}{2}\}$, then $S \cup \{h\} \in \Sem(F)$.
\item\label{lem4:3} $S$ is irreducible if and only if $\{x \in \N\backslash S: F-x \not\in S \text{ and } x \neq \frac{F}{2}\} \neq \emptyset$.
\end{enumerate}
\end{lem}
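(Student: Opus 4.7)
The plan is to establish (1), (2), (3) in sequence, with (1) and (2) serving as building blocks for (3).

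For (1), I would argue the two directions separately. If $S \in \Sem(F)$ is maximal in $\Sem(F)$ but reducible, write $S = S_1 \cap S_2$ with $S \subsetneq S_i$; since $F \notin S_1 \cap S_2$, $F$ lies outside (say) $S_1$. Then every gap of $S_1$ is a gap of $S$, giving $\F(S_1) \leq F$, while $F \notin S_1$ gives $\F(S_1) \geq F$; hence $S_1 \in \Sem(F)$, contradicting maximality. Conversely, suppose $S$ is irreducible, hence symmetric or pseudo-symmetric by \cite{barucci,froberg}, and suppose for contradiction that $S \subsetneq T \in \Sem(F)$. Pick $x \in T \backslash S$. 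If $x \neq \frac{F}{2}$, the pairing yields $F - x \in S \subseteq T$, so $F = x + (F - x) \in T$, contradicting $F \notin T$. If $x = \frac{F}{2}$ (possible only in the pseudo-symmetric case), then $x + x = F \in T$, same contradiction.

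For (2), I would verify that $T := S \cup \{h\}$ is a numerical semigroup with Frobenius number $F$. Finiteness of the complement is automatic, and $h < F$ (since $F - h \notin S$ forces $F - h > 0$), so $F$ remains the largest gap of $T$. The subtle step is closure under addition. The crucial observation is $h > \frac{F}{2}$: otherwise $F - h > h$ would itself satisfy the three defining conditions (gap of $S$; complement $F - (F-h) = h$ a gap; and $F - h \neq \frac{F}{2}$ since $h \neq \frac{F}{2}$), contradicting the maximality of $h$. Then for $a \in S$ with $a > 0$, one has $h + a > h > \frac{F}{2}$, so $h + a \neq \frac{F}{2}$; if moreover $h + a \notin S$, maximality of $h$ forces $F - (h+a) \in S$, whence $F - h = (F - h - a) + a \in S$, contradicting $F - h \notin S$. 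Finally $2h > F$ gives $2h \in S$, so closure holds and $T \in \Sem(F)$.

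For (3), I would combine (1) and (2). If the set $\{x \in \N \backslash S : F - x \notin S,\ x \neq \frac{F}{2}\}$ is nonempty, part (2) produces $S \cup \{h\} \in \Sem(F)$ strictly enlarging $S$, so $S$ is not maximal in $\Sem(F)$ and, by (1), not irreducible. Conversely, if that set is empty, every gap $x \neq \frac{F}{2}$ of $S$ satisfies $F - x \in S$, which is precisely the defining condition of symmetric ($F$ odd, or $F$ even with $\frac{F}{2} \in S$) and pseudo-symmetric ($F$ even with $\frac{F}{2}$ a gap) semigroups --- i.e., the irreducible ones. I note that the statement as printed reads ``$\neq \emptyset$'', but the argument forced by the directions of (1) and (2) establishes the equivalence with ``$=\emptyset$''; this appears to be a typographical inversion of the intended claim.

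The main technical obstacle is the closure argument in (2), which rests entirely on deducing $h > \frac{F}{2}$ from the $x \mapsto F - x$ symmetry of the defining set; without this, the cases $h + a = \frac{F}{2}$ and the reuse of $F - h \notin S$ cannot be ruled out.
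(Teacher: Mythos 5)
Your proof is correct, but a comparison with ``the paper's proof'' is vacuous here: the paper gives no proof of Lemma~\ref{lem:4} at all --- it is stated as a result ``deduced in \cite{springer}''. So what you have supplied is a self-contained argument for a fact the authors import as a black box. Your argument for (2) is the standard one and is sound: the key deduction $h>\frac{F}{2}$ from the symmetry $x\mapsto F-x$ of the defining set is exactly right, and the closure argument ($h+a\in S$ for $0\neq a\in S$, and $2h>F$) is complete. (One small slip: $F-h\not\in S$ by itself does not force $F-h>0$; you also need $h\le F$, which follows from $h$ being a gap of $S$. The conclusion $0<h<F$ is of course still correct.) You are also right that item (3) as printed is a typographical inversion: as it is used in the proof of Proposition~\ref{prop:6} (``not irreducible $\Rightarrow$ the maximum $h$ exists'') and in Lemma~\ref{lem:5}, the intended statement is that $S$ is irreducible if and only if the set is \emph{empty}, which is what you prove.

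The one point worth flagging is your proof of the ``irreducible $\Rightarrow$ maximal'' direction of (1) and of the ``empty $\Rightarrow$ irreducible'' direction of (3), both of which invoke the characterization of irreducible semigroups as the symmetric and pseudo-symmetric ones. The paper's introduction does cite that characterization (from \cite{pacific}, \cite{barucci}, \cite{froberg}) as known, so your derivation is legitimate in context; but in the standard development (e.g.\ \cite{springer}) that characterization is itself \emph{obtained from} the equivalence of irreducibility with maximality in $\Sem(F)$, so as a from-scratch proof your route is circular relative to that reference. A self-contained alternative for ``not maximal $\Rightarrow$ reducible'' is: given $S\subsetneq T\in\Sem(F)$, pick $h\in T\setminus S$ minimal; then $S\cup\{h\}$ and $S\cup\{F-h,\rightarrow\}\cdots$ --- more simply, one shows $S=(S\cup\{h\})\cap(S\cup\{x\in\N\setminus S: x>h\})$ after checking both are numerical semigroups containing $S$ properly and not containing $F$. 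This is a cosmetic rather than fatal issue, since the cited characterization is a theorem of the literature independent of this lemma, but you should state explicitly which external results you are taking as given.
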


From \eqref{lem4:2} and \eqref{lem4:3} in the lemma above we deduce the following result.

\begin{lem}
\label{lem:5}
Let $S \in \Sem(F)$, then, $S \cup \{x \in \N\backslash S: F-x \not\in S \text{ and } x > \frac{F}{2}\} \in \I(F)$.
\end{lem}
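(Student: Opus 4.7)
Write $T := S \cup \{x \in \N\backslash S: F-x \not\in S \text{ and } x > \frac{F}{2}\}$. My plan is to verify three things: that $T$ is a numerical semigroup, that $\F(T)=F$, and that $T$ satisfies the emptiness criterion from Lemma \ref{lem:4}\eqref{lem4:3}. Two of these are quick: $T$ contains $0$ and has finite complement in $\N$ because $T\supseteq S$, so only closure under addition needs attention; and $\F(T)=F$ because every integer larger than $F$ already lies in $S\subseteq T$, while $F\notin T$ since $F-F=0\in S$ precludes adjoining $F$.

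The core technical step is closure under addition. For $a,b\in T$ I would distinguish three cases. If $a,b\in S$, the sum lies in $S\subseteq T$. If $a,b\in T\setminus S$, both exceed $\frac{F}{2}$, so $a+b>F$ and thus $a+b\in S$. The delicate case is $a\in S$, $b\in T\setminus S$; assuming $a+b\notin S$, the sum is a gap, so $a+b\leq F$, and it exceeds $\frac{F}{2}$ because $b$ does. If $F-(a+b)$ belonged to $S$, adding $a$ would give $F-b\in S$, contradicting the membership of $b$ in $T\setminus S$; this incidentally also rules out $a+b=F$, since that would force $F-(a+b)=0\in S$. Hence $a+b\in T\setminus S$, which finishes closure.

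For irreducibility, I would suppose towards a contradiction that there exists $x\in\N\setminus T$ with $F-x\notin T$ and $x\neq\frac{F}{2}$. Then $x\notin S$ and $F-x\notin S$. If $x>\frac{F}{2}$, the three defining conditions for $T\setminus S$ all hold for $x$, so $x\in T$, a contradiction. If $x<\frac{F}{2}$, set $y:=F-x>\frac{F}{2}$; then $y\notin S$ and $F-y=x\notin S$, so $y\in T$, contradicting $F-x\notin T$. Lemma \ref{lem:4}\eqref{lem4:3} then yields $T\in\I(F)$. The main obstacle is the mixed case of closure under addition, where the three membership conditions defining $T\setminus S$ must be combined simultaneously; everything else amounts to careful bookkeeping around Lemma \ref{lem:4}.
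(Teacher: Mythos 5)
Your proof is correct and complete, but it takes a more self-contained route than the paper. The paper deduces Lemma \ref{lem:5} by iterating Lemma \ref{lem:4}: as long as the set $H(S)=\{x \in \N\setminus S: F-x\notin S \text{ and } x\neq \frac{F}{2}\}$ is nonempty, item \eqref{lem4:2} lets one adjoin its maximum $h$ (necessarily $>\frac{F}{2}$, since $x\in H(S)$ with $x<\frac{F}{2}$ forces $F-x\in H(S)$) and stay inside $\Sem(F)$; each step removes the pair $\{h,F-h\}$ from $H$ and leaves the remaining elements above $\frac{F}{2}$ untouched, so the process terminates exactly at $S\cup\{x\in\N\setminus S: F-x\notin S \text{ and } x>\frac{F}{2}\}$, which is then irreducible by \eqref{lem4:3}. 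You instead verify everything directly: you check closure under addition of $T$ by hand --- the only delicate case being $a\in S$, $b\in T\setminus S$ with $a+b\notin S$, which you settle correctly by observing that $F-(a+b)\in S$ would yield $F-b\in S$ --- and then confirm the criterion of \eqref{lem4:3} via the two symmetric cases $x>\frac{F}{2}$ and $x<\frac{F}{2}$. Your version buys transparency and avoids the implicit induction on the size of $H(S)$, at the cost of redoing the semigroup-closure work that Lemma \ref{lem:4}\eqref{lem4:2} packages. One remark: as printed, Lemma \ref{lem:4}\eqref{lem4:3} reads ``irreducible if and only if the set is $\neq\emptyset$''; this is a typo for $=\emptyset$ (as the proof of Proposition \ref{prop:6} confirms), and you correctly interpreted it as the emptiness criterion.
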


We are now ready to prove the result announced at the beginning of this section.

\begin{prop}
\label{prop:6}
Let $S \in \Sem(F)$, then $[S] \cap \I(F) = \{\Urm([S])\}$. Moreover, $\Urm([S]) = S \cup \{x \in \N\backslash S: F-x \not\in S \text{ and } x > \frac{F}{2}\}$.
\end{prop}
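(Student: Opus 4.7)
Define $T := S \cup \{x \in \N\setminus S : F-x \notin S \text{ and } x > F/2\}$. The strategy is to show three things: (i) $T \in [S]$; (ii) $T \in \I(F)$; and (iii) every $S' \in [S]$ satisfies $S' \subseteq T$. Combined with the remark following Theorem~\ref{theo:3} (which identifies $\Urm([S])$ as the maximum of $[S]$ under inclusion) and with Lemma~\ref{lem:4}\eqref{lem4:1} (irreducible equals maximal in $\Sem(F)$), these facts yield both $T = \Urm([S])$ and $[S] \cap \I(F) = \{T\}$.

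Part (ii) is exactly Lemma~\ref{lem:5}. For (i), I would just compute $\theta(T)$: every element appended to $S$ lies strictly above $F/2$, so $\{t \in T \setminus \{0\} : t < F/2\} = \{s \in S \setminus \{0\} : s < F/2\}$, giving $\theta(T) = \theta(S)$ and hence $T \in [S]$.

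The heart of the argument is (iii). Fix $S' \in [S]$ and $y \in S' \setminus \{0\}$; I would argue by cases on the size of $y$. If $y > F$ then $y \in S \subseteq T$ automatically. The values $y = F$ and $y = F/2$ are both impossible, the first because $\F(S') = F$ and the second because $F = 2y$ would then lie in $S'$. If $0 < y < F/2$, then $y \in \theta(S') = \theta(S) \subseteq S \subseteq T$. The delicate case is $F/2 < y < F$ with $y \notin S$, where I must prove $F-y \notin S$: assuming the contrary, $0 < F - y < F/2$ puts $F - y$ into $\theta(S) = \theta(S')$, so $F = y + (F - y) \in S'$, contradicting $\F(S') = F$. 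Thus $y \in T$ in every case, proving $S' \subseteq T$. Once $T = \Urm([S])$ is established, uniqueness of the irreducible representative is immediate: any $S' \in [S] \cap \I(F)$ is maximal in $\Sem(F)$ by Lemma~\ref{lem:4}\eqref{lem4:1}, is contained in $T \in \Sem(F)$, and hence equals $T$.

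The main obstacle is the final case of step (iii); it is the only place where the hypothesis $\theta(S') = \theta(S)$ is used nontrivially, and it hinges on the identity $F = y + (F - y)$ to transfer information about elements below $F/2$ in $S$ into a contradiction about elements above $F/2$ in $S'$.
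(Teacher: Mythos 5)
Your proof is correct, but it is organized differently from the paper's. The paper never verifies by hand that $T=S \cup \{x \in \N\backslash S: F-x \not\in S \text{ and } x > \frac{F}{2}\}$ contains every member of $[S]$; instead it first proves abstractly that $\Urm([S])$ (already known to be the maximum of $[S]$ from Lemma~\ref{lem:2}) is irreducible, by contradiction: if not, Lemma~\ref{lem:4}\eqref{lem4:2}--\eqref{lem4:3} produce an $h=\max\{x \in \N\backslash \Urm([S]): F-x \not\in \Urm([S]),\ x \neq \frac{F}{2}\}$ with $\Urm([S])\cup\{h\}\in\Sem(F)$, and since $h>\frac{F}{2}$ this enlarged semigroup stays in $[S]$, contradicting maximality. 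Uniqueness then follows from Lemma~\ref{lem:4}\eqref{lem4:1} exactly as in your argument, and the explicit formula drops out for free: Lemma~\ref{lem:5} puts $T$ in $\I(F)\cap[S]$, which has just been shown to be the singleton $\{\Urm([S])\}$. You invert this logic: your step (iii) establishes the formula $T=\Urm([S])$ first, by a direct element-by-element case analysis (whose delicate case, $\frac{F}{2}<y<F$ with $y\notin S$, you handle correctly via $F=y+(F-y)$ and $\theta(S)=\theta(S')$), and you then import irreducibility wholesale from Lemma~\ref{lem:5}. What the paper's route buys is brevity --- it leans on the lattice structure of $[S]$ and on uniqueness to avoid any element chasing; what your route buys is a self-contained, constructive verification of the formula that makes visible exactly where the hypothesis $\theta(S')=\theta(S)$ is used, at the cost of reproving concretely what the paper gets from the abstract maximality of $\Urm([S])$. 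Both arguments ultimately rest on Lemmas~\ref{lem:4} and~\ref{lem:5}, so the difference is one of architecture rather than of underlying ideas.
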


\begin{proof}
Let us see first that $\Urm([S]) \in \I(F)$. Suppose that $\Urm([S])$ is not irreducible, then by \eqref{lem4:2} and \eqref{lem4:3} in Lemma \ref{lem:4}, we have that there exists $h=\max \{x \in \N\backslash \Urm([S]): F-x \not\in \Urm([S]) \text{ and } x \neq \frac{F}{2}\}$. Thus, $\Urm([S]) \cup \{h\} \in \Sem(F)$. It is clear also that $h > \frac{F}{2}$ and then $\Urm([S]) \cup \{h\} \in [S]$, contradicting the maximality of $\Urm([S])$ in $[S]$.

Let us see now that if $S' \in [S] \cap \I(F)$ then $S' = \Urm([S])$. Observe that $S'$ and $\Urm([S])$ are two irreducible numerical semigroups with Frobenius number $F$ and $S' \subseteq \Urm([S])$. By applying \eqref{lem4:1} in Lemma \ref{lem:4}, we get that $S'=\Urm([S])$.

Finally, as a direct consequence of Lemma \ref{lem:5} we have that $S \cup \{x \in \N\backslash S: F-x \not\in S \text{ and } x > \frac{F}{2}\} \in \I(F) \cap [S] = \{\Urm([S]\}$.
\end{proof}

Recall the for any positive integer $F$, $\Ho(F)$ denotes the set of homogeneous numerical semigroups with Frobenius number $F$. Our next goal in this section is to prove that $\Ho(F) = \{\Zrm([S]): [S] \in \frac{\Sem(F)}{R}\}$.

The following result has an easy proof and appears in \cite{springer}.

\begin{lem}
\label{lem:7}
Let $S$ be a numerical semigroup and $x$ a minimal generator of $S$. Then, $S \backslash \{x\}$ is also a numerical semigroup.
\end{lem}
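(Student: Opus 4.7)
The plan is to verify the three defining conditions of a numerical semigroup for $S\setminus\{x\}$: containing zero, having finite complement in $\N$, and being closed under addition. The first two are essentially immediate, and the interesting content lies entirely in the closure condition, for which the minimality of $x$ will be the crucial ingredient.

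First I would observe that since $x$ is a minimal generator, $x\neq 0$, so $0\in S\setminus\{x\}$. Also $\N\setminus(S\setminus\{x\}) = (\N\setminus S)\cup\{x\}$, which is finite because $\N\setminus S$ is finite. These handle two of the three axioms.

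For closure, suppose towards a contradiction that there exist $a,b\in S\setminus\{x\}$ with $a+b\notin S\setminus\{x\}$. Since $S$ is a semigroup, $a+b\in S$, so necessarily $a+b=x$. If either $a$ or $b$ equals $0$, then the other equals $x$, contradicting $a,b\neq x$. Hence $a,b\in S\setminus\{0,x\}$, and we have written $x$ as a sum of two nonzero elements of $S$. This is where I would invoke the standard characterization of minimal generators: an element $x\in S\setminus\{0\}$ is a minimal generator of $S$ if and only if it cannot be expressed as $a+b$ with $a,b\in S\setminus\{0\}$. The decomposition $x=a+b$ therefore contradicts the hypothesis that $x$ is a minimal generator, completing the proof.

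The main (mild) obstacle is ensuring that the alternative characterization of minimal generators used in the closure step is either cited or justified in one line, since the definition given in the paper is in terms of minimal systems of generators rather than indecomposability. If I did not wish to rely on that characterization, I could argue directly: if $x=a+b$ with $a,b\in S\setminus\{0\}$ then $a,b<x$, so writing each of $a,b$ in terms of the minimal system of generators $A$ of $S$ yields an expression of $x$ as an $\N$-linear combination of elements of $A\setminus\{x\}$, which would let $A\setminus\{x\}$ generate $S$ and contradict the minimality of $A$.
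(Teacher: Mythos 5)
Your proof is correct and complete. The paper itself does not prove this lemma --- it simply remarks that the result ``has an easy proof and appears in \cite{springer}'' --- so there is no argument in the text to compare against; yours is the standard one. All three axioms are handled properly, and the only point requiring care (that a sum $a+b=x$ with $a,b\in S\setminus\{0,x\}$ contradicts minimality of $x$) is justified twice over: once via the usual characterization of minimal generators as the elements of $S\setminus\{0\}$ not expressible as a sum of two nonzero elements of $S$, and once by the direct observation that $a,b<x$ forces any expression of $a$ and $b$ over the minimal generating set $A$ to avoid $x$, whence $A\setminus\{x\}$ would still generate $S$. Either justification suffices; no gaps.
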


Let $a \in \N$. We use $\{a, \rightarrow\}$ to denote the set $\{z \in \N: z \geq a\}$.

\begin{prop}
\label{prop:8}
Let $S \in \Sem(F)$, then $[S]\cap \Ho(F) = \{\Zrm([S])\}$. Moreover, $\Zrm([S])= \langle \theta(S) \rangle \cup \{F+1, \rightarrow\}$.
\end{prop}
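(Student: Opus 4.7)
The plan is to verify that the explicit candidate $T := \langle \theta(S) \rangle \cup \{F+1,\rightarrow\}$ (with the convention $\langle\emptyset\rangle=\{0\}$ for the degenerate case $\theta(S)=\emptyset$) is both the minimum of $[S]$ and the unique homogeneous element of $[S]$. First I would check that $T\in[S]$. Since $\theta(S)\subseteq S$ and $S$ is closed under addition, $\langle\theta(S)\rangle\subseteq S$, and together with $\{F+1,\rightarrow\}\subseteq S$ this gives $T\subseteq S$, so $F\notin T$; closure of $T$ under addition is immediate by splitting cases on whether either summand exceeds $F$, hence $T\in\Sem(F)$. The equality $\theta(T)=\theta(S)$ is straightforward: $\theta(S)\subseteq\theta(T)$ by inspection, and conversely any $t\in\theta(T)$ satisfies $t<F/2<F+1$ and so lies in $\langle\theta(S)\rangle\subseteq S$, whence $t\in\theta(S)$.

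Next, for every $S'\in[S]$ we have $\theta(S)=\theta(S')\subseteq S'$ and $\{F+1,\rightarrow\}\subseteq S'$, so $T\subseteq S'$; intersecting over $[S]$ and using $T\in[S]$ gives $T=\Zrm([S])$. To see that $T\in\Ho(F)$, note that any $x\in T$ with $F/2<x<F$ lies in $\langle\theta(S)\rangle$ (since $x<F+1$), so $x=a_1+\cdots+a_k$ with $a_i\in\theta(S)$. Each $a_i<F/2<x$ forces $k\geq 2$, so $x$ is not a minimal generator of $T$.

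The main obstacle is uniqueness of the homogeneous element of $[S]$. Suppose $T'\in[S]\cap\Ho(F)$; the inclusion $T=\Zrm([S])\subseteq T'$ is automatic, so for the reverse I would argue by minimal counterexample: assume $T'\setminus T\neq\emptyset$ and take $t\in T'\setminus T$ minimal. Then $0<t\leq F-1$ and $t\notin\langle\theta(S)\rangle$. If $t<F/2$ then $t\in\theta(T')=\theta(S)\subseteq\langle\theta(S)\rangle$, a contradiction; hence $F/2<t<F$. By homogeneity of $T'$, $t$ is not a minimal generator of $T'$, so $t=a+b$ with $a,b\in T'\setminus\{0\}$ and $a,b<t$. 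Minimality of $t$ forces $a,b\in T$, and since $a+b=t<F$ neither $a$ nor $b$ reaches $F+1$, so both lie in $\langle\theta(S)\rangle$; thus $t\in\langle\theta(S)\rangle\subseteq T$, the desired contradiction.
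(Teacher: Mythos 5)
Your proof is correct, and it is organized rather differently from the paper's. The paper starts from the abstract minimum $\Zrm([S])=\bigcap_{S'\in[S]}S'$ (whose membership in $[S]$ comes from Lemma \ref{lem:2}), proves it is homogeneous by contradiction using Lemma \ref{lem:7} (deleting a minimal generator in $]\frac{F}{2},F[$ would produce a strictly smaller element of $[S]$), then shows uniqueness by an element chase, and only at the very end identifies the explicit form, asserting without detail that $\langle\theta(S)\rangle\cup\{F+1,\rightarrow\}\in\Ho(F)\cap[S]$. You instead take the explicit candidate $T=\langle\theta(S)\rangle\cup\{F+1,\rightarrow\}$ as the starting point and verify everything about it directly: that it lies in $[S]$, that it is contained in every member of $[S]$ (which immediately gives $T=\Zrm([S])$ without invoking the lattice structure), and that it is homogeneous because every element of $]\frac{F}{2},F[$ in $T$ is a sum of at least two nonzero elements of $\langle\theta(S)\rangle$. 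This bypasses Lemma \ref{lem:7} entirely and supplies the verification the paper leaves implicit. Your uniqueness step is in substance the same as the paper's (homogeneity forbids minimal generators in the middle interval, so such elements decompose into smaller pieces already accounted for), but your minimal-counterexample formulation makes explicit the induction that the paper compresses into one sentence. One trivial point worth a half-line in a write-up: when you conclude ``hence $F/2<t<F$'' you should dispose of $t=F/2$, which is impossible because $2t=F$ would then lie in the semigroup $T'$ while $\F(T')=F$.
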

\begin{proof}
Let us see first that $\Zrm([S]) \in  \Ho(F)$. If $\Zrm([S])$ is not homogeneous, then $\Zrm([S])$ has a minimal generator $x$ in $]\frac{F}{2}, F[$. By applying Lemma \ref{lem:7} we deduce that $\Zrm([S]) \backslash \{x\} \in [S]$, contradicting the minimality of $\Zrm([S])$.

Let us see now that if $S' \in [S] \cap \Ho(F)$, then $S'=\Zrm([S])$. Since $S' \in [S]$, then $\Zrm([S]) \subseteq S'$. We prove now the other inclusion. Let $x \in S'$. If $x < \frac{F}{2}$ then, $x \in \Zrm([S])$ since $S' R \Zrm([S])$. Because $S'$ has not minimal generators in $]\frac{F}{2}, F[$, we deduce that if $\frac{F}{2} <x < F$ then $x \in \Zrm([S])$. Finally, if $x>F$, then $x \in \Zrm([S])$ since $\F(\Zrm([S]))=F$.

To conclude the proof of the proposition note that $\langle \theta(S) \rangle  \cup \{F+1, \rightarrow\} \in \Ho(F) \cap [S]$. Hence, $S' = \Zrm([S])$.
\end{proof}

As a direct consequence of Propositions \ref{prop:6} and \ref{prop:8} we have the following result.

\begin{cor}
\label{cor:9}
The correspondence $\Delta: \I(F) \rightarrow \Ho(F)$ defined by $\Delta(S) = \langle \theta(S) \rangle  \cup \{F+1, \rightarrow\}$ is a bijective application. Furthermore, $\Delta^{-1}: \Ho(F) \rightarrow \I(F)$ is given by $\Delta^{-1}(S) = S \cup \{x \in \N\backslash S: F-x \not\in S \text{ and } x > \frac{F}{2}$.
\end{cor}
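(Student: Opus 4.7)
The proof is essentially a bookkeeping exercise combining Propositions \ref{prop:6} and \ref{prop:8}: each class $[S] \in \frac{\Sem(F)}{R}$ contains exactly one irreducible semigroup $\Urm([S])$ and exactly one homogeneous semigroup $\Zrm([S])$, so pairing these two distinguished representatives of each class yields the claimed bijection. The plan is to make this chain of correspondences explicit and then read off the formula for $\Delta^{-1}$.

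First I would verify that $\Delta$ is well-defined. Given $S \in \I(F)$, since $S \in [S]$ we may apply Proposition \ref{prop:8} to obtain that $\Zrm([S]) = \langle \theta(S) \rangle \cup \{F+1, \rightarrow\}$, which in particular belongs to $\Ho(F)$. Thus $\Delta(S) = \Zrm([S]) \in \Ho(F)$, so $\Delta$ indeed maps $\I(F)$ into $\Ho(F)$.

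Next I would establish bijectivity by factoring $\Delta$ through the quotient $\frac{\Sem(F)}{R}$. By Proposition \ref{prop:6}, the map $\I(F) \to \frac{\Sem(F)}{R}$ sending $S \mapsto [S]$ is a bijection (surjective because every class contains some irreducible, and injective because each class contains exactly one such element). By Proposition \ref{prop:8}, the analogous map $\Ho(F) \to \frac{\Sem(F)}{R}$ is also a bijection. Since $\Delta$ is the composition $S \mapsto [S] \mapsto \Zrm([S])$ of two bijections, it is itself bijective.

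Finally I would identify $\Delta^{-1}$. Given $H \in \Ho(F)$, the unique irreducible element of $[H]$ is $\Urm([H])$ by Proposition \ref{prop:6}, and the bijective chain above forces $\Delta^{-1}(H) = \Urm([H])$. Applying the explicit formula from Proposition \ref{prop:6} to $S = H$ gives $\Delta^{-1}(H) = H \cup \{x \in \N \backslash H : F-x \not\in H \text{ and } x > \tfrac{F}{2}\}$, as claimed. There is no real obstacle here; the only thing to keep straight is that when $H \in \Ho(F)$ we have $\Zrm([H]) = H$, so the composition $\Delta \circ \Delta^{-1}$ indeed returns $H$, and symmetrically $\Delta^{-1} \circ \Delta$ returns the original irreducible, since an irreducible $S$ satisfies $\Urm([S]) = S$.
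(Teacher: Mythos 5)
Your argument is correct and is precisely the ``direct consequence'' of Propositions \ref{prop:6} and \ref{prop:8} that the paper invokes without spelling out: each class contains exactly one irreducible element ($\Urm([S])$) and exactly one homogeneous element ($\Zrm([S])$), so both $\I(F)$ and $\Ho(F)$ biject with $\frac{\Sem(F)}{R}$ via $S \mapsto [S]$, and $\Delta$ is the resulting composition. The identification of $\Delta^{-1}$ from the explicit formula for $\Urm([H])$ in Proposition \ref{prop:6} is exactly as intended.
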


Another immediate corollary that we get from the above results is the following:
\begin{cor}
\label{cor:10}
Let $S \in \I(F)$. Then $[S] = \{S\}$ if an only if $S$ is homogeneous.
\end{cor}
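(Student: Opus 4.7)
The plan is to leverage Propositions \ref{prop:6} and \ref{prop:8} together with the observation noted just after Theorem \ref{theo:3}: for any class $[S] \in \frac{\Sem(F)}{R}$, the element $\Zrm([S])$ is the minimum of $[S]$ and $\Urm([S])$ is the maximum of $[S]$, both with respect to inclusion. This gives an immediate criterion: $[S]$ collapses to a single element precisely when $\Zrm([S]) = \Urm([S])$.

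For the forward implication, assume $S \in \I(F)$ and $[S] = \{S\}$. Then $S$ is simultaneously the maximum and minimum of its class, so $S = \Urm([S]) = \Zrm([S])$. By Proposition \ref{prop:8}, $\Zrm([S]) \in \Ho(F)$, and hence $S$ is homogeneous.

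For the reverse implication, assume $S \in \I(F) \cap \Ho(F)$. Since $S \in \I(F)$, Proposition \ref{prop:6} yields $S = \Urm([S])$; since $S \in \Ho(F)$, Proposition \ref{prop:8} yields $S = \Zrm([S])$. Therefore $\Urm([S]) = \Zrm([S])$, and since every $S' \in [S]$ satisfies $\Zrm([S]) \subseteq S' \subseteq \Urm([S])$, we conclude $[S] = \{S\}$.

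There is no real obstacle here; the entire content of the corollary is a direct consequence of the uniqueness statements in Propositions \ref{prop:6} and \ref{prop:8} combined with the inclusion-theoretic interpretation of $\Zrm$ and $\Urm$. The only point one must be careful about is recording that the minimum and maximum characterization of these neutral elements forces every intermediate $S'$ in $[S]$ to coincide with $S$ once the two extremes agree.
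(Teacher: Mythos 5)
Your proof is correct and follows exactly the route the paper intends: the corollary is stated as an immediate consequence of Propositions \ref{prop:6} and \ref{prop:8}, and your argument (identifying $S$ with both $\Urm([S])$ and $\Zrm([S])$ and using that these are the maximum and minimum of the class) is the intended one. No issues.
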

\section{An algorithm to compute $\Sem(F)$}
\label{sec:4}
The goal of this section is to describe an algorithmic procedure to compute $\Sem(F)$ by enumerating each of the elements of the congruence classes in $\frac{\Sem(F)}{R}$.

Observe that as a consequence of Proposition \ref{prop:6} we have the following result.

\begin{lem}
\label{lem:10}
Let $F$ be a positive integer, then $\Sem(F)=\bigcup_{S \in \I(F)} [S]$. Moreover, if $S, S' \in \I(F)$ and $S\neq S'$, then $[S] \cap [S'] = \emptyset$.
\end{lem}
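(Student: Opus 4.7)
The plan is to derive both statements directly from Proposition~\ref{prop:6} and the fact that $\frac{\Sem(F)}{R}$ is, by construction, the partition of $\Sem(F)$ induced by the kernel congruence of $\theta$.

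For the first equality, I would argue by double inclusion. The inclusion $\bigcup_{S \in \I(F)} [S] \subseteq \Sem(F)$ is immediate since each class $[S]$ is a subset of $\Sem(F)$. For the reverse inclusion, pick an arbitrary $T \in \Sem(F)$ and consider its class $[T]$; by Proposition~\ref{prop:6}, the maximum $\Urm([T])$ belongs to $\I(F)$ and lies in $[T]$, so $[T]=[\Urm([T])]$. Since $T \in [T]$, we have $T \in [\Urm([T])] \subseteq \bigcup_{S \in \I(F)} [S]$.

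For the disjointness statement, I would use the contrapositive together with the uniqueness part of Proposition~\ref{prop:6}. Suppose $S, S' \in \I(F)$ with $[S] \cap [S'] \neq \emptyset$; then the two classes coincide (classes of an equivalence relation are either equal or disjoint), so $[S]=[S']$. Applying Proposition~\ref{prop:6} to this common class, both $S$ and $S'$ equal the unique irreducible element $\Urm([S])$ of the class, forcing $S=S'$.

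There is essentially no obstacle here: the work was done in Proposition~\ref{prop:6}, which tells us both that every class contains an irreducible semigroup (so the union covers $\Sem(F)$) and that this irreducible semigroup is unique in the class (so distinct irreducibles give distinct, hence disjoint, classes). The only care needed is to make explicit the passage between ``$T \in [T]$'' and ``$[T]=[\Urm([T])]$'', which is just the definition of the congruence $R$.
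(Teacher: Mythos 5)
Your proof is correct and follows exactly the route the paper intends: the paper gives no explicit proof, merely observing that the lemma is a consequence of Proposition~\ref{prop:6}, and your argument is the natural expansion of that observation (existence of $\Urm([T]) \in \I(F) \cap [T]$ gives the covering, and its uniqueness within each class gives disjointness).
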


In \cite{arbol} it is shown an algorithmic procedure to compute all the elements in $\I(F)$. Hence, to compute $\Sem(F)$ we concentrate on describe a procedure that computes $[S]$ from a given $S \in \I(F)$.

Let $S \in  \I(F)$. We denote by $\Delta(S)=\Zrm([S])$. Observe that $S=\Urm([S])$. The next result has an immediate proof.

\begin{lem}
\label{lem:11}
Let $S \in \I(F)$ and $S' \in \Sem(F)$. Then $S' \in [S]$ if and only if $\Delta(S) \subseteq S' \subseteq S$.
\end{lem}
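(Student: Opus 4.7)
The plan is to verify both directions directly from the definitions, using the characterization of $\Zrm([S])$ and $\Urm([S])$ as the minimum and maximum of $[S]$ under inclusion (stated right after Theorem \ref{theo:3}) together with the fact that $S R S'$ means $\theta(S) = \theta(S')$.

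For the forward implication, suppose $S' \in [S]$. Since $S \in \I(F)$, Proposition \ref{prop:6} gives $S = \Urm([S])$, and by definition $\Delta(S) = \Zrm([S])$. Since $\Urm([S])$ and $\Zrm([S])$ are the maximum and minimum of $[S]$ with respect to inclusion, we immediately get $\Delta(S) \subseteq S' \subseteq S$.

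For the converse, assume $\Delta(S) \subseteq S' \subseteq S$ with $S' \in \Sem(F)$. The goal is to show $\theta(S') = \theta(S)$, so that $S R S'$ and hence $S' \in [S]$. From $S' \subseteq S$ it is immediate that $\theta(S') \subseteq \theta(S)$. For the reverse inclusion, note that $\Delta(S) \in [S]$ (it is the minimum of its own class), so $\theta(\Delta(S)) = \theta(S)$; combined with $\Delta(S) \subseteq S'$, this yields $\theta(S) = \theta(\Delta(S)) \subseteq \theta(S')$. Equality follows, so $S' R S$.

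There is no real obstacle here: the whole statement is essentially the assertion that $[S]$ is an interval in the inclusion order bounded below by $\Zrm([S])$ and above by $\Urm([S])$, and the only nontrivial input is the monotonicity of $\theta$ (from $A \subseteq B$ it follows that $\theta(A) \subseteq \theta(B)$), which is obvious from the definition $\theta(T) = \{t \in T \setminus \{0\} : t < F/2\}$.
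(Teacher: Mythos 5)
Your proof is correct and is exactly the ``immediate proof'' the paper alludes to without writing down: the forward direction follows from $S=\Urm([S])$, $\Delta(S)=\Zrm([S])$ being the maximum and minimum of $[S]$, and the converse from the monotonicity of $\theta$ squeezing $\theta(S')$ between $\theta(\Delta(S))$ and $\theta(S)$, which coincide. Nothing is missing.
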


If $S \in \I(F)$ we denote by $\D(S)=S \backslash \Delta(S)$. Note that $\Delta(S)= \langle \theta(S) \rangle \cup \{F+1, \rightarrow\}$. Thus, $\D(S)$ is easy to compute from $S$.

For any two sets of nonnegative integers $A$ and $B$ we denote by $A+B= \{a+b: a \in A, b \in B\}$.

\begin{lem}
\label{lem:12}
Let $S \in \I(F)$ and let $B$ be a subset of $\D(S)$. Then, $\Delta(S)$ $\cup$ \\$\left( (B+\Delta(S)) \cap \D(S)\right) \in [S]$. Moreover, all the elements in $[S]$ are in that form.
\end{lem}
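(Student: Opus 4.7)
The plan is to apply Lemma \ref{lem:11}, which reduces membership in $[S]$ to the inclusions $\Delta(S) \subseteq S' \subseteq S$ plus being a numerical semigroup. Setting $T := \Delta(S) \cup ((B+\Delta(S)) \cap \D(S))$, the outer inclusions are immediate: $\Delta(S) \subseteq T$ by construction, and $T \subseteq \Delta(S) \cup \D(S) = S$. Since $\Delta(S) \supseteq \{F+1,\rightarrow\}$ and $F \notin S \supseteq T$, once $T$ is shown to be a semigroup it is automatically a numerical semigroup with $\F(T)=F$, hence by Lemma \ref{lem:11} an element of $[S]$.

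For closure of $T$ under addition I would split into three cases by which of the two pieces of $T$ each summand comes from. If both $t_1,t_2 \in \Delta(S)$, then $t_1+t_2 \in \Delta(S) \subseteq T$. If $t_1 \in \Delta(S)$ and $t_2 = b+d \in (B+\Delta(S)) \cap \D(S)$, rewrite $t_1+t_2 = b + (t_1+d) \in B+\Delta(S)$; since also $t_1+t_2 \in S = \Delta(S) \sqcup \D(S)$, the sum lies in $\Delta(S)$ or in $(B+\Delta(S)) \cap \D(S)$, hence in $T$. The delicate case is $t_1,t_2 \in \D(S)$, and the key observation I would isolate first is that $\D(S) \subseteq \,]\tfrac{F}{2},F[\,$. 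This follows from the description $\Delta(S)=\langle \theta(S)\rangle \cup \{F+1,\rightarrow\}$: every $s \in S$ with $0 < s < \tfrac{F}{2}$ belongs to $\theta(S) \subseteq \Delta(S)$, and every $s \geq F+1$ belongs to $\Delta(S)$, while $\tfrac{F}{2} \notin S$ (otherwise $F \in S$). With this, $t_1+t_2 > F$, so $t_1+t_2 \in \{F+1,\rightarrow\} \subseteq T$, finishing the closure argument.

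For the ``moreover'' part I would exhibit, for a given $S' \in [S]$, the natural choice $B := S' \setminus \Delta(S) = S' \cap \D(S) \subseteq \D(S)$, which is legitimate by Lemma \ref{lem:11}. One inclusion is straightforward: $\Delta(S) \subseteq S'$ by Lemma \ref{lem:11}, and for any $b \in B \subseteq S'$ and $d \in \Delta(S) \subseteq S'$ one has $b+d \in S'$ by closure, so the whole candidate set lies in $S'$. For the reverse inclusion, any $x \in S'$ is either in $\Delta(S)$ or in $S' \cap \D(S) = B$, and in the latter case the identity $x = x + 0$ together with $0 \in \Delta(S)$ places $x$ in $(B+\Delta(S)) \cap \D(S)$.

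I expect the main obstacle to be the closure check in the subcase $t_1,t_2 \in \D(S)$: it is conceptually easy once one records that $\D(S)$ lives strictly above $F/2$, but this localization of $\D(S)$ is not stated explicitly in the text and must be extracted from the formula for $\Delta(S)$ before the three-case analysis can be assembled cleanly.
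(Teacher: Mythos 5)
Your proposal is correct and follows essentially the same route as the paper: reduce to the semigroup property via Lemma \ref{lem:11}, observe that $\D(S)$ (hence the second piece of $T$) lies strictly between $\tfrac{F}{2}$ and $F$ so that such sums exceed $F$ and fall into $\Delta(S)$, and for the converse take $B=S'\setminus\Delta(S)$ and use closure of $S'$. You are merely more explicit than the paper, which compresses the mixed case and asserts the localization of $\D(S)$ above $\tfrac{F}{2}$ without proof.
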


\begin{proof}
Let $\overline{S} = \Delta(S) \cup \left( (B+\Delta(S)) \cap \D(S)\right)$. It is clear that $\Delta(S) \subseteq \overline{S} \subseteq S$. Then, by Lemma \ref{lem:11} to prove that $\overline{S} \in [S]$ it is enough to see that $\overline{S}$ is a semigroup. For that, we only need to prove that the addition of two elements in $B$ belongs to $\overline{S}$. This is clear since $B \subseteq \D(S)$ and then, all the elements in $B$ are greater than $\frac{F}{2}$. Consequently, the addition of two elements in $B$ is greater than $F$ and then, belonging to $\Delta(S)$.

Let $S' \in [S]$. From Lemma \ref{lem:11} we deduce that $S' = \Delta(S) \cup B$ with $B \subseteq \D(S)$. By applying that $S'$ is a semigroup we get that $S' =\Delta(S) \cup \left( (B+\Delta(S)) \cap \D(S)\right)$.
\end{proof}

Let $d \in \D(S)$. We denote by $\T(d) = \left( \{d\} + \Delta(S)\right) \cap \D(S)$. If $B \subseteq \D(S)$, then $\T(B) = \bigcup_{b \in B} \T(b)$. The next result is a reformulation of Lemma \ref{lem:12} with this new notation.

\begin{prop}
\label{prop:13}
Let $S \in \I(F)$ and $A=\{\T(B): B \subseteq \D(S)\}$. Then, $[S] = \{\Delta(S) \cup X: X \in A\}$.
\end{prop}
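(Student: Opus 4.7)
The plan is to derive Proposition~\ref{prop:13} directly from Lemma~\ref{lem:12}, since the notation $\T$ has been introduced precisely to repackage the construction that appears there. Concretely, I need to show that the family of sets $\{(B+\Delta(S)) \cap \D(S) : B \subseteq \D(S)\}$ produced by Lemma~\ref{lem:12} coincides with $A = \{\T(B) : B \subseteq \D(S)\}$, and then translate the two inclusions of Lemma~\ref{lem:12} across this identification.

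First, I would verify the set-theoretic identity $(B+\Delta(S)) \cap \D(S) = \T(B)$ for every $B \subseteq \D(S)$. Writing $B+\Delta(S) = \bigcup_{b \in B} (\{b\} + \Delta(S))$ and distributing intersection over the union yields
\[
(B+\Delta(S)) \cap \D(S) = \bigcup_{b \in B} \bigl((\{b\}+\Delta(S)) \cap \D(S)\bigr) = \bigcup_{b \in B} \T(b) = \T(B),
\]
where the last two equalities are the definitions of $\T(b)$ and $\T(B)$.

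With this identity in hand, both inclusions follow mechanically. For the inclusion $[S] \subseteq \{\Delta(S) \cup X : X \in A\}$, if $S' \in [S]$, then Lemma~\ref{lem:12} furnishes a subset $B \subseteq \D(S)$ with $S' = \Delta(S) \cup ((B+\Delta(S)) \cap \D(S)) = \Delta(S) \cup \T(B)$, so $S'$ has the required form. Conversely, any $X \in A$ equals $\T(B) = (B+\Delta(S)) \cap \D(S)$ for some $B \subseteq \D(S)$, and the first half of Lemma~\ref{lem:12} then gives $\Delta(S) \cup X \in [S]$.

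There is essentially no mathematical obstacle beyond the distributive identity above; the entire content of the proposition is a notational rephrasing of Lemma~\ref{lem:12}, and the only thing to be careful about is keeping the roles of $B$, $\T(B)$, and $\D(S)$ straight while unfolding definitions.
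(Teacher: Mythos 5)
Your proof is correct and matches the paper's intent exactly: the paper gives no separate proof of Proposition~\ref{prop:13}, stating only that it is ``a reformulation of Lemma~\ref{lem:12} with this new notation,'' and your verification of the identity $(B+\Delta(S)) \cap \D(S) = \T(B)$ by distributing the intersection over the union $B+\Delta(S)=\bigcup_{b\in B}(\{b\}+\Delta(S))$ is precisely the bookkeeping the authors leave implicit.
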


The pseudo-code in Algorithm \ref{alg:1} shows how to compute $[S]$ for any $S\in \I(F)$.

\begin{algorithm2e}[h]

\caption{Computation of the class of $S$.\label{alg:1}}

\SetKwInOut{Input}{Input}
\SetKwInOut{Output}{Output}
\Input{$S \in  \I(F)$.}

$\bullet$ Compute $\Delta(S) = \langle \theta(S) \rangle \cup \{F+1, \rightarrow\}$ and $\D(S) = S \backslash \Delta(S)$.

$\bullet$ Set $A= \{\T(B): B \subseteq \D(S)\}$.

\Output{$[S] = \{\Delta(S) \cup X: X \subseteq A\}$.}
\end{algorithm2e}

In the following example we illustrate the usage of Algorithm \ref{alg:1}.
\begin{ex}
\label{ex:1}
Let us compute $[S]$ for $S=\langle 3, 5\rangle \in \I(7)$. Observe that $\theta(S)=\{3\}$.
\begin{itemize}
\item $\Delta(S) = \langle 3 \rangle \cup \{8, \rightarrow\} = \langle 3,8,10 \rangle$ and $\D(S) = \{5\}$.
\item $\T(5) = \left( \{5\} + \langle 3,8,10 \rangle\right) \cap  \{5\} = \{5\}$. $\T(\{5\})=\{5\}$  $A = \{\emptyset, \{5\}\}$.
\end{itemize}
Then, $[S] = \{\langle 3,8,10 \rangle \cup \emptyset, \langle 3,8,10 \rangle \cup \{5\}\} = \{ \langle 3,8,10 \rangle, \langle 3, 5 \rangle\}$.
\end{ex}

\section{The Kunz-coordinates vectors of  $\Sem(F)$}
\label{sec:5}

In this section we use a different encoding of a numerical semigroups to compute $[S]$ for any $S\in \I(F)$, and for any positive integer $F$. We present an analogous construction to the one in the section above but based on manipulating vectors in $\{0,1\}^F$. It leads us to an efficient procedure to compute the set $\Sem(F)$.

 Let $S$ be a numerical semigroup and $n \in S\backslash \{0\}$. The \emph{Ap\'ery set} of $S$ with respect to $n$ is the set $\Ap(S,n) = \{s
\in S :  s - n \not\in S\}$. This set was introduced by Ap\'ery in \cite{apery}.

The following characterization  of the Ap\'ery set that appears in \cite{springer} will be useful for our development.
 \begin{lem}
 \label{lem:13}
Let $S$ be a numerical semigroup and $n \in S\backslash \{0\}$. Then $\Ap(S,n) = \{0 = w_0, w_1, \ldots,  w_{n -
1}\}$, where $w_i$ is the least
element in $S$ congruent with $i$ modulo $n$, for $i=1, \ldots, n-1$.
 \end{lem}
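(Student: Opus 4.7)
The plan is to show that the set $\Ap(S,n)$ consists of exactly one representative per residue class modulo $n$, namely the least element of $S$ in that class. I would split the proof into two directions (inclusion and reverse inclusion), preceded by a preliminary observation about residue classes in $S$.

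First I would observe that for each $i \in \{0,1,\ldots,n-1\}$, the residue class $S_i = \{s \in S : s \equiv i \pmod n\}$ is nonempty. This follows because $\N\backslash S$ is finite, so every residue class contains sufficiently large integers that lie in $S$. Let $w_i = \min S_i$; in particular $w_0 = 0$. The key structural fact is that since $n \in S$ and $w_i \in S$, closure under addition forces $w_i + kn \in S$ for every $k \geq 0$, and conversely any element of $S_i$ is of the form $w_i + kn$ with $k \geq 0$ (being congruent to $w_i$ modulo $n$ and at least $w_i$). Therefore
\[
S_i = \{w_i + kn : k \in \N\}.
\]

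For the inclusion $\{w_0,\ldots,w_{n-1}\} \subseteq \Ap(S,n)$, I need to check that $w_i - n \notin S$. If $w_i < n$ this is immediate since $w_i - n$ is negative. Otherwise $w_i - n$ is a nonnegative integer congruent to $i$ modulo $n$ and strictly smaller than $w_i$, so by the minimality of $w_i$ it cannot lie in $S$. For the reverse inclusion, take any $s \in \Ap(S,n)$ and let $i = s \bmod n$. Then $s \in S_i$, so by the structural description $s = w_i + kn$ for some $k \geq 0$. If $k \geq 1$, then $s - n = w_i + (k-1)n \in S$, contradicting $s \in \Ap(S,n)$. Hence $k = 0$ and $s = w_i$.

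Combining the two directions gives $\Ap(S,n) = \{w_0, w_1, \ldots, w_{n-1}\}$, and these elements are pairwise distinct because they lie in distinct residue classes, so in particular $|\Ap(S,n)| = n$. There is no real obstacle here; the only point that could be overlooked is the necessity of using $n \in S$ to guarantee that the residue class $S_i$ is precisely the arithmetic progression starting at $w_i$ with step $n$, which is what makes the characterization of Apéry elements clean.
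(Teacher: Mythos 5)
Your proof is correct and complete. The paper does not actually prove this lemma---it is quoted from \cite{springer} without argument---so there is nothing in-paper to compare against; your reasoning (each residue class modulo $n$ meets $S$ because $\N\backslash S$ is finite, the class equals the arithmetic progression starting at $w_i$ with step $n$ because $n\in S$, and the Ap\'ery elements are exactly the minimal points of these progressions) is the standard argument and handles all cases, including $w_i<n$.
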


Moreover, the set $\Ap(S,n)$ completely determines $S$, since $S =
\langle \Ap(S,n) \cup \{n\} \rangle$ (see \cite{london}), and then, we can identify $S$ with its Ap\'ery set with respect to $n$. The set $\Ap(S,n)$ contains, in general, more information than an arbitrary system of
generators of $S$. For instance, Selmer in \cite{selmer77} gives the formulas,
$\g(S)=\frac{1}{n}\left(\sum_{w \in \Ap(S, n)} w\right) -
\frac{n-1}{2}$ and $\F(S) = \max(\Ap(S,n)) - n$. One can also test if a nonnegative integer $s$ belongs to $S$ by checking if $w_{s\pmod m} \leq s$.

We consider an useful modification of the Ap\'ery set that we call the \emph{Kunz-coordinates vector} as in \cite{bp2011}. Let $S$ be a numerical semigroup and $n \in S\backslash\{0\}$. By Lemma \ref{lem:13}, $\Ap(S, n)=\{w_0=0, w_1, \ldots, w_{n-1}\}$, with $w_i$ congruent with $i$ modulo $n$. The \emph{Kunz-coordinates vector} of $S$ with respect to $n$ is the vector $\Kunz(S,n) = x \in \N^{n-1}$ with components $x_i = \frac{w_i-i}{n}$ for $i=1, \ldots, n-1$. If $x \in \N^{n-1}$ is a Kunz-coordinates vector, we denote by $S_x$ the numerical semigroup such that $\Kunz(S_x, n) = x$.
The Kunz-coordinates vectors were introduced in \cite{kunz} and have been previously analyzed when $n=\m(S)$ in \cite{bp2011,london}.

We also denote by $\Kunz(F) = \{\Kunz(S, F+1) : S \in \Sem(F)\}$ the set of Kunz-coordinates vectors associated to the numerical semigroups with Frobenius number $F$ and by $\Kunz^\I (F) = \{\Kunz(S, F+1) : S \in \I(F)\}$ (resp. $\Kunz^\Ho (F) = \{\Kunz(S, F+1) : S \in \Ho(F)\}$) the set of Kunz-coordinates vectors of the set of irreducible (resp. homogeneous) numerical semigroups with Frobenius number $F$.

For $x, y \in \Kunz(F)$ we define the binary operation
$$
x \bullet y = (\max\{x_1, y_1\}, \ldots, \max\{x_F, y_F\}).
$$
It is easy to see that if $x, y \in \Kunz(F)$, $\Kunz(S_x \cap S_y, F+1) = \Kunz(S_x, F+1)\,\bullet\,\Kunz(S_y, F+1)$ and then, $(\Kunz(F), \bullet)$ is a semilattice.

It is also clear that $\Kunz(\cdot, F+1): \Sem(F) \longleftarrow \Kunz(F)$, is a semigroup isomorphism with inverse $\varphi(x) = \langle F+1, (F+1)x_1+1, \ldots, (F+1)x_F+F\rangle$, for any $x \in \Kunz(F)$. We define $\theta^K$ as the unique homomorphism such that the following diagram holds:
$$
\xymatrix{ \Sem(F) \ar[rr]^{\Kunz(\cdot, F+1)} \ar[dr]_{\theta}& & \Kunz(F) \ar[dl]^{\theta^K} \\
& \P(\Nrm(F)) &}
$$
Then, we define the kernel congruence over $\Kunz(F)$, $R^K$, associated to $\theta^K$. Thus, $\frac{\Kunz(F)}{R^K}$ is also a semilattice with the operation $[x]\bullet[y] = [x\bullet y]$. By Lemma \ref{lem:2} we have that $[x]$ is closed under the operation $\bullet$ and also under the operation $a \star b = (\min\{a_1, b_1\}, \ldots, \min\{a_F, b_F\})$ (which coincides with the scalar product of $a$ and $b$), for any $a, b \in [x]$. Note that if $x, y \in \Kunz(F)$, $S_{x\bullet y} = S_x \cap S_y$ and $S_{x \star y} = S_x \cup S_y$.

By Theorem \ref{theo:3} we have that $\frac{\Kunz(F)}{R^K}$ is a partition of $\Kunz(F)$ in distributive lattices with neutral elements. Actually, this neutral elements are $\Urm^K([x]) = (\max\{y_1: y \in [x]\}, \ldots, \max\{y_F: y \in [x]\})$ (for $\bullet$) and $\Zrm^K([x]) = (\min\{y_1: y \in [x]\}, \ldots, \min\{y_F: y \in [x]\})$ (for $\star$).

In the following result it is shown the structure of the Kunz-coordinates vector of a numerical semigroup with Frobenius number $F$.
\begin{lem}
\label{lem:14}
Let $S$ be a numerical semigroup and $x=\Kunz(S, \F(S)+1) \in \N^F$. Then:
\begin{enumerate}
\item $ x \in \{0,1\}^{\F(S)}$,
\item\label{lem14:2} $\G(S) = \{i \in \{1, \ldots, \F(S)\}: x_i =1\}$,
\item $\g(S)= \dsum_{i=1}^F x_i$.
\item $i \in \{1, \ldots, \F(S)\}$ is a minimal generator of $S$ if and only if $x_i=0$ and $x_j + x_{i-j} \geq 1$ for all $j<i$.
\end{enumerate}
\end{lem}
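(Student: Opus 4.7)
The plan is to work through the four items in sequence; each follows directly from the definition of $\Kunz(S, F+1)$ combined with the Apéry-set description in Lemma~\ref{lem:13} applied with $n = F+1$.

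For (1), I would sandwich each $w_i$: for $i \in \{1,\ldots,F\}$, I claim $i \leq w_i \leq i + (F+1)$. The lower bound is immediate since $w_i$ is a nonnegative integer congruent to $i$ modulo $F+1$ with $1 \leq i \leq F$. For the upper bound, note that $i+(F+1) > F$ belongs to $S$ (since $F$ is the Frobenius number, every integer exceeding $F$ lies in $S$) and is congruent to $i$ modulo $F+1$; as $w_i$ is the smallest such representative, it cannot exceed $i+(F+1)$. Dividing by $F+1$ yields $x_i \in \{0,1\}$, since $x_i$ is an integer by construction.

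For (2), observe that $x_i = 0$ amounts to $w_i = i$, which means that $i$ itself is the least element of $S$ congruent to $i$ modulo $F+1$; this happens iff $i \in S$. Since $0 \in S$ and every integer larger than $F$ lies in $S$, the gaps of $S$ all lie in $\{1,\ldots,F\}$, so $\G(S) = \{i : x_i = 1\}$. Statement (3) then follows at once by summing the 0/1 indicators. For (4), I would translate the definition of minimal generator into coordinates: $i \in \{1,\ldots,F\}$ is a minimal generator iff $i \in S$ and $i$ admits no decomposition $i = j + (i-j)$ with $j, i-j \in S\setminus\{0\}$. By (2), $i \in S$ is equivalent to $x_i = 0$, and the non-decomposability says that for every $j \in \{1,\ldots,i-1\}$ at least one of $j$ or $i-j$ is a gap, i.e.\ $x_j + x_{i-j} \geq 1$.

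There is no serious obstacle: the proof is essentially bookkeeping. The only point requiring a little care is the upper bound in (1), where the hypothesis that $F$ is exactly the Frobenius number of $S$ is used decisively to guarantee $i + (F+1) \in S$. Items (2)--(4) are then direct reformulations in terms of the coordinates $x_i \in \{0,1\}$.
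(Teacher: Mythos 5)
Your proof is correct and complete; the paper actually states Lemma~\ref{lem:14} without proof, and your argument is precisely the routine verification it leaves implicit: the sandwich $i \leq w_i \leq i+(F+1)$ for item (1), the equivalence $x_i=0 \Leftrightarrow i\in S$ for items (2)--(3), and the standard characterization of minimal generators as nonzero elements of $S$ not expressible as a sum of two nonzero elements of $S$ for item (4). No gaps.
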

It is not difficult to see, from \cite{arbol} or \cite{london}, that $\Kunz(F) = \{x \in \{0,1\}^F: x_F=1 \mbox{ and } x_i + x_j - x_{i+j} \geq 0, \mbox{ for all } 1 \leq i \leq j \leq F \mbox{ with } i+j \leq F\}$.

Furthermore, from \eqref{lem14:2} in Lemma \ref{lem:14} we have that $\theta^K(x) = \{i \in \{1, \ldots, \left\lceil \frac{F}{2}\right\rceil -1\}: x_i=0\}$, for all $x \in  \Kunz(F)$. (Here, we denote by $\lceil z \rceil  = \min\{n \in \Z: z\leq n\}$ the ceiling part of any rational number $z$.)

It is well-known (see for instance Corollary 3.5 in \cite{springer}) that a numerical semigroup $S$ is irreducible if and only if $\g(S) = \lceil \frac{\F(S)+1}{2}\rceil$.

The following result, whose proof is direct, allows us to characterize the Kunz-coordinates vectors of the elements in $\I(F)$ and $\Ho(F)$.
\begin{lem}
\label{lem:15}
Let $F$ be a positive integer. Then, $\Kunz^\I(F)$ is the set of solutions of the following system of binary inequalities and equations:
\begin{align}
x_i + x_j - x_{i+j} &\geq 0  \mbox{, for all $1 \leq i \leq j \leq F$ with $i+j \leq F$}\nonumber\\
\dsum_{i=1}^F x_i &= \left\lceil \frac{F+1}{2} \right\rceil\nonumber\\
x_F&=1\nonumber\\
x &\in \{0,1\}^F\nonumber
\end{align}
And $\Kunz^\Ho(F)$ is the set of solutions of the following system:
\begin{align}
x_i + x_j - x_{i+j} &\geq 0  \mbox{, for all $1 \leq i \leq j \leq F$ with $i+j \leq F$}\nonumber\\
x_j + x_{i-j} &\leq 2x_{i} \mbox{, for all $i > \frac{F}{2}$ and $j < i$}\nonumber\\
x_F&=1\nonumber\\
x &\in \{0,1\}^F\nonumber
\end{align}
\end{lem}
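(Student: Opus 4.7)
The plan is to read both systems as refinements of the characterization of $\Kunz(F)$ stated just before the lemma, namely
\[
\Kunz(F)=\{x\in\{0,1\}^F : x_F=1 \text{ and } x_i+x_j-x_{i+j}\geq 0 \text{ for all } 1\leq i\leq j\leq F,\ i+j\leq F\}.
\]
In both systems of Lemma~\ref{lem:15}, the first and the last lines reproduce this baseline, so the task reduces to showing that the remaining constraint in each case singles out, among $x\in\Kunz(F)$, exactly those vectors for which $S_x$ is irreducible (resp.\ homogeneous). The verification then splits cleanly into two independent pieces.

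For $\Kunz^\I(F)$, the argument I would give is a direct combination of two ingredients already at hand: the classical criterion quoted in the text, $S\in\I(F)$ iff $\g(S)=\lceil (F+1)/2\rceil$, and the genus formula $\g(S_x)=\sum_{i=1}^F x_i$ from Lemma~\ref{lem:14}(3). Together these show that, for $x\in\Kunz(F)$, the equation $\sum_{i=1}^F x_i=\lceil (F+1)/2\rceil$ holds if and only if $S_x$ is irreducible. Adjoining this single linear equation to the defining system of $\Kunz(F)$ yields the first system of the statement.

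For $\Kunz^\Ho(F)$, I would use Lemma~\ref{lem:14}(4): an index $i\in\{1,\dots,F\}$ is a minimal generator of $S_x$ precisely when $x_i=0$ and $x_j+x_{i-j}\geq 1$ for every $j<i$. Homogeneity of $S_x$ says that no such $i$ lies in $(F/2,F)$, so the requirement becomes that for every $i$ with $F/2<i<F$, either $x_i=1$ (i.e.\ $i$ is a gap) or there exists some $j<i$ with $x_j+x_{i-j}=0$ (i.e.\ $i$ decomposes non-trivially inside $S_x$). Translating this pointwise produces the inequalities $x_j+x_{i-j}\leq 2x_i$: when $x_i=1$ the inequality is vacuous, whereas when $x_i=0$ it constrains the pair $(j,i-j)$ to witness the missing decomposition. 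The equivalence is then closed by a short case analysis on the value of $x_i\in\{0,1\}$ in both directions.

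The main obstacle is the homogeneous case. The definition of ``minimal generator'' is itself a ``$\forall j$'' statement, so its negation carries an existential over decompositions, which is delicate to encode as a pure linear system in $x$. Care is needed at the endpoints of the index range, since $i=F$ is already handled by $x_F=1$ (so $F$ is automatically a gap, not a minimal generator) and indices $i\leq F/2$ carry no homogeneity obligation and must not be over-constrained; one must also check that minimal generators $\geq F+1$, which the Kunz vector cannot see directly, do not spoil the equivalence.
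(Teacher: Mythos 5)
The paper itself gives no written proof of this lemma (it is introduced as a result ``whose proof is direct''), so the only question is whether your derivation from Lemma \ref{lem:14} and the quoted facts actually goes through. Your first half does: the first, third and fourth lines of each system are exactly the stated description of $\Kunz(F)$, and adjoining $\sum_{i=1}^F x_i=\lceil (F+1)/2\rceil$ is, by Lemma \ref{lem:14}(3) and the genus criterion for irreducibility, precisely the condition $S_x\in\I(F)$. That part is correct and is clearly the intended argument.

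The homogeneous half has a genuine gap, and it sits exactly at the obstacle you yourself flag. Non-membership of $i$ in the set of minimal generators, when $x_i=0$, is the statement ``\emph{there exists} $j<i$ with $x_j=x_{i-j}=0$,'' but the system imposes $x_j+x_{i-j}\leq 2x_i$ \emph{for every} $j<i$; when $x_i=0$ this is the conjunction ``$x_j=x_{i-j}=0$ for all $j<i$,'' which is far stronger than the existence of one witnessing decomposition. No case analysis on $x_i\in\{0,1\}$ repairs this, because the two conditions are genuinely inequivalent: take $F=5$ and $x=(1,0,1,0,1)$, so $S_x=\langle 2,7\rangle$, whose minimal generators $2$ and $7$ lie outside $]5/2,5[$, so $S_x\in\Ho(5)$ (it is also the unique element of its class in Example \ref{ex:20}, hence equals $\Zrm([S_x])$ and is homogeneous by Proposition \ref{prop:8}). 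Here $4$ is not a minimal generator because of the witness $j=2$ ($x_2=x_2=0$), yet the constraint with $i=4$, $j=1$ reads $x_1+x_3\leq 2x_4$, i.e.\ $2\leq 0$, which fails. Worse, combined with the supermodular inequalities $x_j+x_{i-j}\geq x_i$ and $x_F=1$ (which forces $x_1=1$), the second system actually forces $x_i=1$ for every $F/2<i<F$, so it carves out a strictly smaller set than $\Kunz^\Ho(F)$. To prove the homogeneous characterization you would need a constraint with an existential over $j$ (or an equivalent encoding), not the pointwise family of inequalities; as written, your claimed equivalence --- and the second system of the statement taken literally --- does not hold.
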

From Propositions \ref{prop:6} and \ref{prop:8} we have that for any $x\in \Kunz(F)$, $[x] \cap \Kunz^\I (F) = \{\Urm^K([x])\}$ and $[x] \cap \Kunz^\Ho(F) = \{\Zrm^K([x])\}$. Moreover, $\Kunz(\Delta(S), F+1)$ can be computed from $\Kunz(S,F+1)$ by solving an integer programming problem as stated in the next result.
\begin{prop}
\label{prop:17}
Let $x\in \Kunz^\I(F)$. Then, $\Kunz(\Delta(S_x), F+1)$ is the unique optimal solution of the following binary programming problem:
\begin{align}
& \max \dsum_{i=1}^F z_i& \nonumber\\
s.t.&\nonumber\\
z_i + z_j - z_{i+j} &\geq 0  \mbox{, for all $1 \leq i \leq j \leq F$ with $i+j \leq F$,}\nonumber\\
z_i&=0 \mbox{, for all $i \in \theta^K(x)$,}\tag{${\rm IP}^\Delta (x)$}\label{eq:IPHo}\\
z_F&=1\nonumber\\
z &\in \{0,1\}^F\nonumber
\end{align}
\end{prop}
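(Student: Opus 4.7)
The plan is to translate the binary program into the language of numerical semigroups via Lemma \ref{lem:14} and then exploit the explicit description of $\Delta(S_x)$ provided by Proposition \ref{prop:8}. Any vector $z$ satisfying the Kunz inequalities together with $z_F = 1$ is the Kunz-coordinates vector of some $S_z \in \Sem(F)$; by Lemma \ref{lem:14}, the objective $\sum_{i=1}^F z_i$ is exactly $\g(S_z)$, and the constraint $z_i = 0$ for $i \in \theta^K(x)$ is equivalent to $\theta(S_x) \subseteq S_z$ (recalling that $\theta^K(x)$ and $\theta(S_x)$ coincide as subsets of $\N$). The IP therefore amounts to maximizing $\g(S_z)$ over those $S_z \in \Sem(F)$ that contain $\theta(S_x)$.

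First I would check feasibility of $u := \Kunz(\Delta(S_x), F+1)$: since $\Delta(S_x) \in \Sem(F)$, the vector $u$ lies in $\Kunz(F)$ and $u_F = 1$, while Proposition \ref{prop:8} gives $\theta(S_x) \subseteq \langle \theta(S_x) \rangle \subseteq \Delta(S_x)$, which forces $u_i = 0$ for every $i \in \theta^K(x)$. The heart of the argument is then optimality: for any feasible $z$, the inclusion $\theta(S_x) \subseteq S_z$ together with the fact that $S_z$ is closed under addition yields $\langle \theta(S_x) \rangle \subseteq S_z$, and combining this with $\{F+1, \rightarrow\} \subseteq S_z$ (as $\F(S_z) = F$) gives $\Delta(S_x) \subseteq S_z$ via the formula of Proposition \ref{prop:8}. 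Hence $\g(S_z) \leq \g(\Delta(S_x))$, i.e., $\sum_{i=1}^F z_i \leq \sum_{i=1}^F u_i$, so $u$ is an optimal solution.

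Uniqueness will follow immediately: if $z$ is also optimal, then $\g(S_z) = \g(\Delta(S_x))$ together with $\Delta(S_x) \subseteq S_z$ forces $S_z = \Delta(S_x)$, whence $z = u$. I do not anticipate a significant obstacle; the argument is essentially a dictionary between the Kunz encoding and the semigroup-theoretic picture, with Proposition \ref{prop:8} doing the structural work. The one point that warrants explicit verification is the identification of $\theta^K(x)$ with $\theta(S_x)$ as subsets of $\N$, so that the IP constraint translates cleanly into a containment of sets of non-gaps without any shift of indices.
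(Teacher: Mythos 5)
Your proposal is correct and follows essentially the same route as the paper: both identify the feasible region of \eqref{eq:IPHo} with the set of numerical semigroups in $\Sem(F)$ containing $\theta(S_x)$, recognize the objective as the genus, and conclude via Proposition \ref{prop:8} that $\Delta(S_x)=\langle\theta(S_x)\rangle\cup\{F+1,\rightarrow\}$ is the unique such semigroup of maximal genus. Your version is merely more detailed, in particular in spelling out that every feasible $S_z$ must contain $\Delta(S_x)$, which gives optimality and uniqueness simultaneously.
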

\begin{proof}
It is suffices to notice that the feasible region of \eqref{eq:IPHo} corresponds with the set of numerical semigroups with Frobenius number $F$  and such that all the elements in $\theta(S_x)$ belong to it. When maximizing the overall sum of the coordinates we get the unique (see Proposition \ref{prop:8}) numerical semigroup fulfilling those conditions with minimum number of elements (or equivalently, the maximum number of gaps), which is $\Delta(S_x)$.
\end{proof}

For $x\in \Kunz^\I(F)$, we denote by $\Delta^K(x)$ the optimal solution of problem \eqref{eq:IPHo} or equivalently, $\Delta^K(x)=\Kunz(\Delta(S_x), F+1)$.

The following results are the translations of the computations in Algorithm \ref{alg:1} needed to enumerate the $[S]$ for $S \in \I(F)$ in terms of their Kunz-coordinates vectors.
\begin{lem}
\label{lem:18}
\label{lem:}
Let $x\in\Kunz^\I(F)$ and $y=\Delta^K(x)$. Then, $\D(S_x) = \{i \in \{1, \ldots, F\}: x_i=0 \text{ and } y_i=1\}$.
\end{lem}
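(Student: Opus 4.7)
The plan is to reduce the identity to two applications of Lemma~\ref{lem:14}(\ref{lem14:2}), one for $S_x$ and one for $\Delta(S_x)$, after first confining attention to the range $\{1,\ldots,F\}$.

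First I would observe that $\D(S_x) = S_x \setminus \Delta(S_x)$ is automatically contained in $\{1,\ldots,F\}$. Indeed, by Proposition~\ref{prop:8}, $\Delta(S_x) = \langle \theta(S_x)\rangle \cup \{F+1,\rightarrow\}$, so every integer $\geq F+1$ lies in $\Delta(S_x)$, and of course $0 \in \Delta(S_x)$. Hence only integers $i$ with $1 \leq i \leq F$ can contribute to $\D(S_x)$.

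Second, for such an $i$ I would translate the two membership conditions into equalities of Kunz-coordinates. By Lemma~\ref{lem:14}(\ref{lem14:2}) applied to $S_x$, the set of gaps of $S_x$ in $\{1,\ldots,F\}$ is exactly $\{i : x_i = 1\}$, so $i \in S_x$ if and only if $x_i = 0$. Similarly, applying Lemma~\ref{lem:14}(\ref{lem14:2}) to $\Delta(S_x)$, whose Kunz-coordinates vector with respect to $F+1$ is $y = \Delta^K(x)$ by Proposition~\ref{prop:17}, we get that $i \notin \Delta(S_x)$ if and only if $y_i = 1$. Combining these, $i \in \D(S_x)$ exactly when $x_i = 0$ and $y_i = 1$, which is the claimed identity.

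There is no real obstacle here; the statement is essentially a dictionary translation of $\D(S_x) = S_x \setminus \Delta(S_x)$ under the encoding given by Kunz-coordinates. The only subtle point worth double-checking is that both $S_x$ and $\Delta(S_x)$ have the same Frobenius number $F$, so that the Kunz-coordinates vectors $x$ and $y$ live in the same ambient space $\{0,1\}^F$ and the componentwise comparison is meaningful; this is guaranteed by $\Delta(S_x) \in [S_x] \subseteq \Sem(F)$.
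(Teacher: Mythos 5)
Your proof is correct and is essentially the argument the paper intends: the paper states this lemma without proof as an immediate ``translation,'' and your dictionary argument via Lemma~\ref{lem:14}(\ref{lem14:2}) applied to $S_x$ and to $\Delta(S_x)$ (with the preliminary observation that $\D(S_x)\subseteq\{1,\ldots,F\}$ because $\Delta(S_x)$ contains $0$ and all integers $\geq F+1$) is exactly the verification being left to the reader.
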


For $i \in \{1, \ldots, F\}$, we denote by $\e_i$ the $F$-tuple having a $1$ as its $i$th entry and zeros otherwise.

\begin{lem}
\label{lem:19}
Let $x\in\Kunz^\I(F)$. If $B \subseteq \D(S_x)$ and $X \subseteq \T(B)$. Then, $\Kunz(\Delta(S_x) \cup X, F+1) = \Delta^K(x) - \dsum_{i \in X} \e_i$.
\end{lem}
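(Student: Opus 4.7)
The plan is to translate the set-theoretic construction $\Delta(S_x) \cup X$ into the coordinate language provided by Lemma \ref{lem:14}. The key observation is part \eqref{lem14:2} of that lemma: for any numerical semigroup $T$ with Frobenius number $F$, the $i$-th component of $\Kunz(T, F+1)$ is $1$ precisely when $i$ is a gap of $T$, and $0$ when $i\in T$. So the whole proof reduces to comparing the gap sets of $\Delta(S_x)$ and of $\Delta(S_x) \cup X$ position by position.

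First I would check that $\Delta(S_x) \cup X$ is a numerical semigroup with Frobenius number $F$, so that taking its Kunz-coordinates vector is actually meaningful. Since $X \subseteq \T(B) \subseteq \D(S_x)$, we can apply Lemma \ref{lem:12} (or its reformulation Proposition \ref{prop:13}) to get $\Delta(S_x) \cup X \in [S_x] \subseteq \Sem(F)$. In particular $\F(\Delta(S_x) \cup X) = F$, so its Kunz-coordinates vector with respect to $F+1$ is a well-defined element of $\{0,1\}^F$.

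Next I would describe the gap set of $\Delta(S_x) \cup X$. Because $\D(S_x) = S_x \setminus \Delta(S_x)$, every element of $X$ lies in $\{1,\ldots,F\}$ and is a gap of $\Delta(S_x)$; conversely no new gaps are created (we are only adding elements). Hence
\[
\G(\Delta(S_x)\cup X) \;=\; \G(\Delta(S_x)) \setminus X.
\]
Applying Lemma \ref{lem:14}\eqref{lem14:2} to both $\Delta(S_x)$ and $\Delta(S_x)\cup X$ and comparing entry by entry, the $i$-th coordinate of $\Kunz(\Delta(S_x)\cup X, F+1)$ equals the $i$-th coordinate of $\Delta^K(x)=\Kunz(\Delta(S_x),F+1)$ for $i\notin X$, and equals $0$ (instead of $1$) for $i\in X$. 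This is exactly the statement
\[
\Kunz(\Delta(S_x)\cup X, F+1) \;=\; \Delta^K(x) \;-\; \dsum_{i\in X}\e_i.
\]

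The only point that requires real thought is the first one, namely that $\Delta(S_x)\cup X$ is a semigroup; once this is granted, the equality is just a rewriting of the gap-set identity under the Kunz encoding, and nothing deeper is needed. If one wanted to avoid invoking Lemma \ref{lem:12}, one could verify the semigroup property directly by noting that elements of $X$ exceed $\frac{F}{2}$ (since $X\subseteq \D(S_x)$ and $\D(S_x) \subseteq S_x \setminus \langle\theta(S_x)\rangle$ lies above $\frac{F}{2}$), so the sum of any two of them exceeds $F$ and already belongs to $\Delta(S_x)$.
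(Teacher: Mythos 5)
Your reduction of the identity to a gap-set computation via Lemma \ref{lem:14}\eqref{lem14:2} is fine: since $X\subseteq\D(S_x)\subseteq\{1,\ldots,F\}$ consists of gaps of $\Delta(S_x)$, one indeed has $\G(\Delta(S_x)\cup X)=\G(\Delta(S_x))\setminus X$, and reading this off coordinatewise gives $\Delta^K(x)-\sum_{i\in X}\e_i$. The problem is the step you yourself single out as the only one needing real thought: that $\Delta(S_x)\cup X$ is a numerical semigroup. Lemma \ref{lem:12} does not give this for an arbitrary subset $X\subseteq\T(B)$; it gives it only for $X=(B+\Delta(S_x))\cap\D(S_x)=\T(B)$ itself. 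Your fallback argument is also insufficient: it only checks that the sum of two elements of $X$ exceeds $F$ and hence lies in $\Delta(S_x)$, but says nothing about the sum of an element of $X$ with a small nonzero element of $\langle\theta(S_x)\rangle$, which can land in $\D(S_x)\setminus X$.

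This is not a removable technicality, because the closure claim is false for general $X\subseteq\T(B)$. Take $F=13$ and $S_x=\langle 3,8\rangle\in\I(13)$. Then $\theta(S_x)=\{3,6\}$, $\Delta(S_x)=\langle 3\rangle\cup\{14,\rightarrow\}$, $\D(S_x)=\{8,11\}$ and $\T(8)=\{8,11\}$. For $B=\{8\}$ and $X=\{8\}\subseteq\T(B)$, the set $\Delta(S_x)\cup\{8\}$ is not closed under addition, since $8+3=11$ belongs to neither $\Delta(S_x)$ nor $X$; so the left-hand side of the asserted identity is not even defined for this $X$. (The statement as printed shares this defect, but Algorithm \ref{alg:2} only ever uses $X=\T(B)$, i.e.\ $X\in A$.) The repair is to prove the lemma for $X=\T(B)$ --- or, more generally, for those $X$ satisfying $(X+\Delta(S_x))\cap\D(S_x)\subseteq X$ --- where Lemma \ref{lem:12} does apply and the rest of your argument goes through verbatim.
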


The translation of Corollary \ref{cor:10} for Kunz-coordinates vectors is the following:
\begin{cor}
\label{cor:14}
Let $x\in\Kunz^\I(F)$. Then, $[x] = \{x\}$ if and only if for all $i>\frac{F}{2}$ either $x_i=1$ or there exists $j<i$ such that $x_j=x_{i-j}=0$.
\end{cor}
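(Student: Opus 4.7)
The plan is to obtain this corollary essentially as a dictionary translation of Corollary~\ref{cor:10}, using the Kunz-coordinate characterization of minimal generators provided by Lemma~\ref{lem:14}(4). First I would invoke Corollary~\ref{cor:10}: since $x\in\Kunz^\I(F)$, the semigroup $S_x$ is irreducible, so $[x]=\{x\}$ in $\frac{\Kunz(F)}{R^K}$ if and only if $[S_x]=\{S_x\}$, if and only if $S_x$ is homogeneous. Thus the task reduces to proving that the stated combinatorial condition on $x$ is equivalent to $S_x$ being homogeneous, i.e. to $S_x$ having no minimal generator in the open interval $\bigl]\tfrac{F}{2},F\bigr[$.

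Next I would unpack this condition index by index. By Lemma~\ref{lem:14}(4), an integer $i\in\{1,\dots,F\}$ is a minimal generator of $S_x$ precisely when $x_i=0$ and $x_j+x_{i-j}\geq 1$ for every $j<i$. Negating, $i$ is \emph{not} a minimal generator of $S_x$ if and only if either $x_i=1$ (so $i$ is a gap and trivially not a generator) or there exists $j<i$ with $x_j+x_{i-j}<1$, which since the entries are in $\{0,1\}$ is exactly $x_j=x_{i-j}=0$. Applying this for each $i$ with $\tfrac{F}{2}<i<F$ yields the homogeneity criterion.

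Finally I would observe that extending the quantifier from $\tfrac{F}{2}<i<F$ to $i>\tfrac{F}{2}$ (that is, including $i=F$) costs nothing: any element of $\Kunz(F)$ satisfies $x_F=1$, so the disjunction "$x_F=1$ or \dots" is automatic at $i=F$. Gluing together these three observations proves both implications of the corollary.

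The only step that requires care is the negation in Lemma~\ref{lem:14}(4), but it is a straightforward $\{0,1\}$-arithmetic manipulation; no genuine obstacle is expected, since the result is truly a restatement.
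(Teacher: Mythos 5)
Your proposal is correct and follows exactly the route the paper intends: the paper presents Corollary~\ref{cor:14} as the direct translation of Corollary~\ref{cor:10} via the Kunz-coordinates dictionary, and the only substantive content is the negation of the minimal-generator criterion in Lemma~\ref{lem:14}(4) over $\{0,1\}$ entries, which you carry out correctly, including the observation that the index $i=F$ is harmless because $x_F=1$ for every $x\in\Kunz(F)$.
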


The pseudo-code of the procedure to compute $[S]$ for $S \in \I(F)$ in terms of its Kunz-coordinates vector is shown in Algorithm \ref{alg:2}

\begin{algorithm2e}[H]
\caption{Computation of the class of $x$.\label{alg:2}}

\SetKwInOut{Input}{Input}
\SetKwInOut{Output}{Output}
\Input{$x \in \Kunz^\I(F)$.}

$\bullet$ Solve $\eqref{eq:IPHo}$ and set $y$ as its optimal solution.

$\bullet$ Compute $\D(S_x) = \{i \in \{1, \ldots, F\}: x_i=0 \text{ and } y_i=1\}$.

$\bullet$ Set $A= \{\T(B): B \subseteq \D(S_x)\}$.

\Output{$[x] = \{y - \dsum_{i\in X} \e_i: X \subseteq A\}$.}
\end{algorithm2e}

In the following example we illustrate the proposed methodology to compute the whole set of numerical semigroups with a fixed Frobenius number.
\begin{ex}
\label{ex:20}
We compute the set of numerical semigroups with Frobenius number $5$. By applying the algorithm proposed in \cite{arbol} we get that $\Kunz(\I(F), F+1)=\{ (1, 0, 1, 0, 1), (1, 1, 0, 0, 1)\}$. For each of these two elements we compute its class by Algorithm \ref{alg:2}:
\begin{enumerate}
 \item $x^1=(1, 0, 1, 0, 1)$:
\begin{itemize}
 \item The solution of \eqref{eq:IPHo} is $y^1=(1,0,1,0,1)$. Since $x^1=y^1$, we can conclude the the unique Kunz-coordinates vector in its class is itself since the maximum and the minimum element coincides in this case.
\end{itemize}
Then  $[x^1] = \{(1,0,1,0,1)\}$.
\item $x^2=(1,1,0,0,1)$:
\begin{itemize}
 \item The solution of \eqref{eq:IPHo} is, in this case, $y=(1,1,1,1,1)$.
\item $\D(S_{x^2}) = \{3,4\}$.
\item $A = \{\emptyset,\{3\}, \{4\}, \{3,4\}\}$, since $\T(3)=\{3\}$ and $\T(4)=\{4\}$.
\end{itemize}
Hence, $[x^2] = \{(1,1,1,1,1), (1,1,0,1,1), (1,1,1,0,1), (1,1,0,0,1)\}$.
\end{enumerate}
Then,
$$
\Kunz(5) = [x^1] \cup [x^2] =  \{(1,0,1,0,1), (1,1,1,1,1), (1,1,0,1,1), (1,1,1,0,1), (1,1,0,0,1)\}.
$$
In terms of the semigroup, it is easy to compute from that set that
$$
\Sem(5) = \{\langle 2, 7\rangle,  \langle 6, 7, 8, 9, 10, 11 \rangle, \langle  3, 7, 8 \rangle, \langle  4, 6, 7, 9 \rangle, \langle 3, 4 \rangle
 \}.
$$
\end{ex}

Note that the computations in Algorithm \ref{alg:2} are much faster than those in Algorithm \ref{alg:1} due to the simplicity of working with $0-1$ vectors instead of numerical semigroups. Observe also that, in general, the computation of the Ap\'ery sets and the Kunz-coordinates vectors of a numerical semigroup is hard. However, it is not needed to compute the Ap\'ery set of any numerical semigroup involved in the algorithm, since $\Kunz^\I(F)$ is computed by using the algorithm provided in \cite{arbol} (which works only by manipulating $0-1$ vectors) and $\Delta^K(x)$ is computed without knowing $\Delta(S)$ by solving an integer programming problem. Furthermore, a system of generators of the obtained semigroups is easy to give for any element  $y \in [x]$ by applying that $S_y = \langle F+1, (F+1)y_1+1, \ldots, (F+1)y_F+F\rangle$.

Observe also that some of the computations can be avoided by applying Corollary \ref{cor:14} since in case the hypotheses are fulfilled, neither the integer programming problem nor the other operations are needed.


\end{document}